\begin{document}
\title[INEQUALITIES FOR $m-$CONVEX FUNCTIONS]{NEW OSTROWSKI TYPE
INEQUALITIES FOR $m-$CONVEX FUNCTIONS AND APPLICATIONS}
\author{HAVVA KAVURMACI$^{\blacktriangle ,\bigstar }$}
\address{$^{\blacktriangle }$ATATURK UNIVERSITY, K.K. EDUCATION FACULTY,
DEPARTMENT OF MATHEMATICS, 25240, CAMPUS, ERZURUM, TURKEY}
\email{havva.kvrmc@yahoo.com}
\thanks{$^{\bigstar }$Corresponding Author}
\author{M. EM\.{I}N \"{O}ZDEM\.{I}R$^{\spadesuit }$}
\address{$^{\spadesuit }$GRADUATE SCHOOL OF NATURAL AND APPLIED SCIENCE, A%
\u{G}RI \.{I}BRAH\.{I}M \c{C}E\c{C}EN UNIVERSITY, A\u{G}RI, TURKEY}
\email{emos@atauni.edu.tr}
\author{MERVE AVCI$^{\blacktriangle }$}
\email{merveavci@ymail.com}

\begin{abstract}
In this paper, we establish new inequalities of Ostrowski type for functions
whose derivatives in absolute value are $m-$convex. We also give some
applications to special means of positive real numbers. Finally, we obtain
some error estimates for the midpoint formula.
\end{abstract}

\keywords{$m-$convex function, Starshaped function, Convex function,
Ostrowski inequality, Hermite-Hadamard inequality, H\"{o}lder inequality,
Power Mean inequality, Special means, The Midpoint formula, Lipschitzian
mapping.}
\subjclass[2000]{ 26A51, 26D10, 26D15}
\maketitle

\section{INTRODUCTION}

Let $f:I\subset \lbrack 0,\infty )\rightarrow 
\mathbb{R}
$ be a differentiable mapping on $I^{\circ }$, the interior of the interval $%
I$, such that $f^{\prime }\in L\left( \left[ a,b\right] \right) $ where $a$ $%
,$ $b\in I$ with $a<b$ . If $\left\vert f^{\prime }\left( x\right)
\right\vert \leq M$ , then the following inequality holds (see \cite{ADDC}):

\begin{equation*}
\left\vert f(x)-\frac{1}{b-a}\int_{a}^{b}f(u)du\right\vert \leq \frac{M}{b-a}%
\left[ \frac{\left( x-a\right) ^{2}+\left( b-x\right) ^{2}}{2}\right] .
\end{equation*}

This inequality is well known in the literature as the \textit{Ostrowski
inequality.}\textbf{\ }For some results which generalize, improve, and
extend the above inequality, see \cite{ADDC},\cite{BCDPS},\cite{CDR},\cite%
{DS1} and \cite{DW}, the references therein.

In \cite{GT1}, G. Toader defined $m-$convexity, an intermediate between the
usual convexity and starshaped property, as the following:

\begin{definition}
\label{d.1.1} The function $f:[0,b]\rightarrow
\mathbb{R}
,$ $b>0$ , is said to be $m-$convex, where $m\in \lbrack 0,1]$, if we have%
\begin{equation*}
f(tx+m(1-t)y)\leq tf(x)+m(1-t)f(y)
\end{equation*}%
for all $x,y\in \left[ 0,b\right] $ and $t\in \left[ 0,1\right] $.
\end{definition}

Denote by $K_{m}(b)$ the set of the $m-$convex functions on $\left[ 0,b%
\right] $ for which $f(0)\leq 0$.

\begin{definition}
\label{d.1.2} The function $f:[0,b]\rightarrow
\mathbb{R}
,$ $b>0$ is said to be starshaped if for every $x\in \left[ 0,b\right] $ and
$t\in \lbrack 0,1]$ we have:%
\begin{equation*}
f(tx)\leq tf(x).
\end{equation*}
\end{definition}

For $m=1$, we recapture the concept of convex functions defined on $[0,b]$
and $m=0$ we get the concept of starshaped functions on $[0,b]$.

The following theorem contains the Hermite-Hadamard type integral inequality
(see \cite{DCK}).

\begin{theorem}
\label{t.1.1} Let $f:I\subset
\mathbb{R}
\rightarrow
\mathbb{R}
$ be an M-Lipschitzian mapping on $I$ and $a,b\in I$ with $a<b.$ Then we
have the inequality:
\end{theorem}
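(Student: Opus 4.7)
Since the body of the inequality is truncated in the excerpt, I will proceed on the assumption---suggested by the reference \cite{DCK} of Dragomir--Cho--Kim and the framing as ``the Hermite--Hadamard type integral inequality''---that the conclusion is a trapezoidal (or midpoint) estimate of the form
$$\left|\frac{f(a)+f(b)}{2}-\frac{1}{b-a}\int_a^b f(t)\,dt\right|\leq C\,M(b-a)$$
for an explicit constant $C$ (typically $\tfrac14$ or $\tfrac13$). My plan targets this shape.

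First I would observe that an $M$-Lipschitzian map on $[a,b]$ is absolutely continuous, hence differentiable almost everywhere with $|f'(t)|\leq M$ a.e.; equivalently, $f$ is of bounded variation with $V_a^b(f)\leq M(b-a)$. This unlocks the standard Peano-kernel identity
$$\frac{f(a)+f(b)}{2}(b-a)\,-\,\int_a^b f(t)\,dt\;=\;\int_a^b\Bigl(t-\tfrac{a+b}{2}\Bigr)f'(t)\,dt,$$
which I would verify by a single integration by parts on the right-hand side, using the absolute continuity of $f$ to justify the boundary evaluation.

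Next I would apply the pointwise bound $|f'(t)|\leq M$ together with the elementary computation
$$\int_a^b\Bigl|t-\tfrac{a+b}{2}\Bigr|\,dt\;=\;\frac{(b-a)^2}{4},$$
to obtain
$$\left|\frac{f(a)+f(b)}{2}(b-a)-\int_a^b f(t)\,dt\right|\;\leq\;\frac{M(b-a)^2}{4}.$$
Dividing through by $b-a$ produces the trapezoidal inequality with $C=\tfrac14$. If a midpoint companion is intended, the analogous argument using the piecewise kernel $K(t)=t-a$ on $[a,(a+b)/2]$ and $K(t)=t-b$ on $[(a+b)/2,b]$ yields the same constant.

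The main obstacle I anticipate is justifying the integration by parts, and with it the a.e.\ pointwise bound on $f'$, from only the Lipschitz hypothesis; I would resolve this by citing the classical equivalence between Lipschitz continuity on a compact interval and being absolutely continuous with essentially bounded derivative. If the authors instead state the variant with $C=\tfrac13$, it would come from bypassing differentiability entirely and estimating $\tfrac12\bigl|(f(a)-f(t))+(f(b)-f(t))\bigr|$ directly via the Lipschitz condition, with a careful accounting of the signs on either side of $(a+b)/2$; the overall structure of the argument is unchanged.
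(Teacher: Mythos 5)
The inequality being asserted here is the \emph{midpoint} form (\ref{h.1.1}), namely $\left\vert f\left(\frac{a+b}{2}\right)-\frac{1}{b-a}\int_{a}^{b}f(x)\,dx\right\vert \leq M\frac{(b-a)}{4}$, not the trapezoidal form you lead with; fortunately you hedge and your ``midpoint companion'' kernel $K(t)=t-a$ on $[a,\frac{a+b}{2}]$, $K(t)=t-b$ on $[\frac{a+b}{2},b]$ does produce exactly this statement with the correct constant $\frac{1}{4}$, so your argument is correct once that branch is selected. Note that the paper itself gives no proof --- the theorem is quoted verbatim from \cite{DCK} --- so there is no internal argument to match against. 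That said, your route is heavier than it needs to be: the detour through absolute continuity, a.e.\ differentiability, and integration by parts can be avoided entirely by writing
\begin{equation*}
\left\vert f\left(\tfrac{a+b}{2}\right)-\frac{1}{b-a}\int_{a}^{b}f(t)\,dt\right\vert
=\frac{1}{b-a}\left\vert \int_{a}^{b}\left( f\left(\tfrac{a+b}{2}\right)-f(t)\right) dt\right\vert
\leq \frac{M}{b-a}\int_{a}^{b}\left\vert t-\tfrac{a+b}{2}\right\vert dt=\frac{M(b-a)}{4},
\end{equation*}
which uses only the Lipschitz condition pointwise and is the natural argument for a hypothesis that does not assume differentiability. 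Your Peano-kernel approach buys nothing extra here, and the ``main obstacle'' you anticipate (justifying the a.e.\ derivative bound) simply never arises in the direct estimate.
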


\begin{equation}
\left\vert f\left( \frac{a+b}{2}\right) -\frac{1}{b-a}\int_{a}^{b}f(x)dx%
\right\vert \leq M\frac{(b-a)}{4}.  \label{h.1.1}
\end{equation}

In \cite{SOS} E. Set, M.E. \"{O}zdemir, M.Z. Sar\i kaya established the
following theorem.

\begin{theorem}
\label{t.1.2} Let $f:I^{\circ }\subset \left[ 0,b^{\ast }\right] \rightarrow
\mathbb{R}
,$ $b^{\ast }>0,$ be a differentiable mapping on $I^{\circ }$, $a,b\in $ $%
I^{\circ }$ with $a<b.$ If $\left\vert f^{\prime }\right\vert ^{q}$ is $m-$%
convex on $[a,b]$, $q>1$ and $m\in (0,1],$ then the following inequality
holds:
\end{theorem}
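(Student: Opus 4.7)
The plan is to follow the standard template for Ostrowski-type inequalities and convert it into an $m$-convex estimate via Hölder's inequality.

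First, I would begin with the Montgomery-type identity, writing
\begin{equation*}
f(x)-\frac{1}{b-a}\int_{a}^{b}f(u)\,du=\frac{1}{b-a}\left[\int_{a}^{x}(t-a)f^{\prime }(t)\,dt+\int_{x}^{b}(t-b)f^{\prime }(t)\,dt\right],
\end{equation*}
which follows by integration by parts. Taking absolute values and using the triangle inequality splits the problem into two pieces, one over $[a,x]$ and one over $[x,b]$, with weights $|t-a|$ and $|b-t|$ respectively.

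Next, since $q>1$, I would apply Hölder's inequality on each piece with conjugate exponent $p=q/(q-1)$, separating the power-of-the-weight factor from $|f'(t)|^{q}$. This produces the integrals $\int_a^x (t-a)^{p}\,dt=(x-a)^{p+1}/(p+1)$ and $\int_x^b (b-t)^{p}\,dt=(b-x)^{p+1}/(p+1)$ on the weight side, together with $\int_a^x |f'(t)|^{q}\,dt$ and $\int_x^b |f'(t)|^{q}\,dt$ on the derivative side.

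The central step is then to bound each $|f'|^{q}$-integral using the $m$-convexity hypothesis. On $[a,x]$ I would parametrise $t=\lambda x+m(1-\lambda)(a/m)$ with $\lambda\in[0,1]$, so that $m$-convexity of $|f'|^{q}$ gives $|f'(t)|^{q}\le \lambda |f'(x)|^{q}+m(1-\lambda)|f'(a/m)|^{q}$; integrating in $\lambda$ yields
\begin{equation*}
\int_{a}^{x}|f^{\prime }(t)|^{q}\,dt\le (x-a)\,\frac{|f^{\prime }(x)|^{q}+m\,|f^{\prime }(a/m)|^{q}}{2}.
\end{equation*}
An analogous substitution $t=\lambda x+m(1-\lambda)(b/m)$ on $[x,b]$ gives the corresponding bound with $a/m$ replaced by $b/m$. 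Substituting both bounds back into the Hölder estimate and collecting constants produces the desired inequality, of the expected form
\begin{equation*}
\frac{1}{b-a}\left(\frac{1}{p+1}\right)^{1/p}\!\left[(x-a)^{2}\!\left(\tfrac{|f'(x)|^{q}+m|f'(a/m)|^{q}}{2}\right)^{1/q}\!+(b-x)^{2}\!\left(\tfrac{|f'(x)|^{q}+m|f'(b/m)|^{q}}{2}\right)^{1/q}\right].
\end{equation*}

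The main obstacle is the third step: the $m$-convexity definition forces a very particular convex combination, so one must verify that the parametrisations $t=\lambda x+m(1-\lambda)(a/m)$ and $t=\lambda x+m(1-\lambda)(b/m)$ actually sweep out $[a,x]$ and $[x,b]$ as $\lambda$ runs over $[0,1]$ (requiring $m\in(0,1]$, which the hypothesis provides) and that $a/m,\,b/m$ stay inside the domain $[0,b^{\ast}]$; everything else is computational.
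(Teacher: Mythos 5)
Your argument is internally sound step by step, but it does not prove the stated inequality (\ref{h.1.2}); it proves a different one. The chain ``Montgomery identity $\rightarrow$ plain H\"{o}lder with exponent $p$ on each half $\rightarrow$ Hermite--Hadamard bound for the $m$-convex function $|f'|^{q}$'' leads to a bound of the form
\begin{equation*}
\frac{1}{(p+1)^{1/p}}\left[\frac{(b-x)^{2}}{b-a}\Bigl(\tfrac{|f'(x)|^{q}+m|f'(b/m)|^{q}}{2}\Bigr)^{1/q}+\frac{(x-a)^{2}}{b-a}\Bigl(\tfrac{|f'(x)|^{q}+m|f'(a/m)|^{q}}{2}\Bigr)^{1/q}\right],
\end{equation*}
which is essentially Theorem \ref{t.2.2} of this paper (one branch of its $\min$), not Theorem \ref{t.1.2}. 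The target (\ref{h.1.2}) is an estimate at the single point $x=\frac{a+b}{2}$, its right-hand side involves $|f'(a)|$ and $m^{1/q}|f'(b/m)|$ to the \emph{first} power (not $q$-th powers averaged under a $1/q$-root), and it carries the constant $3^{1-1/q}/8$. None of these features come out of your computation: even after setting $x=\frac{a+b}{2}$, your bound still contains $|f'(\frac{a+b}{2})|$ and $|f'(a/m)|$, neither of which appears in (\ref{h.1.2}), and there is no way to pass from your estimate to the required one without further hypotheses.

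The missing idea is the \emph{power mean} (weighted) form of H\"{o}lder's inequality rather than the unweighted one. Starting from the kernel identity of Lemma \ref{l.2.1} with $x=\frac{a+b}{2}$ (so $p(t)=t$ on $[0,\frac{1}{2}]$ and $p(t)=t-1$ on $(\frac{1}{2},1]$), one estimates
\begin{equation*}
\int_{0}^{1/2}t\,\bigl|f'(ta+(1-t)b)\bigr|\,dt\le\Bigl(\int_{0}^{1/2}t\,dt\Bigr)^{1-\frac{1}{q}}\Bigl(\int_{0}^{1/2}t\,\bigl|f'(ta+(1-t)b)\bigr|^{q}\,dt\Bigr)^{\frac{1}{q}},
\end{equation*}
and only then inserts the pointwise $m$-convexity bound $|f'(ta+(1-t)b)|^{q}=|f'(ta+m(1-t)\frac{b}{m})|^{q}\le t|f'(a)|^{q}+m(1-t)|f'(\frac{b}{m})|^{q}$ \emph{inside} the weighted integral. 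This keeps the weight $t$ to the first power, yields $\frac{1}{24}|f'(a)|^{q}+\frac{m}{12}|f'(\frac{b}{m})|^{q}$ (and its mirror on $[\frac{1}{2},1]$), and after the elementary steps $(u+v)^{1/q}\le u^{1/q}+v^{1/q}$ and $1+2^{1/q}\le 3$ produces exactly the constant $\frac{3^{1-1/q}}{8}$ and the combination $|f'(a)|+m^{1/q}|f'(\frac{b}{m})|$. This is how the paper itself recovers (\ref{h.1.2}), namely as the case $x=\frac{a+b}{2}$ of Theorem \ref{t.2.3} (see Remark \ref{r.2.5}); your application of H\"{o}lder with the full $p$-th power of the weight discards precisely the structure that the final form of (\ref{h.1.2}) requires.
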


\begin{equation}
\left\vert f\left( \frac{a+b}{2}\right) -\frac{1}{b-a}\int_{a}^{b}f(x)dx%
\right\vert \leq \left( b-a\right) \left( \frac{3^{1-\left( \frac{1}{q}%
\right) }}{8}\right) \left( \left\vert f^{\prime }(a)\right\vert +m^{\frac{1%
}{q}}\left\vert f^{\prime }(\frac{b}{m})\right\vert \right) .  \label{h.1.2}
\end{equation}%
where $\frac{b}{m}<b^{\ast }.$

In \cite{UK} U. Kirmaci proved the following theorem.

\begin{theorem}
\label{t.1.3} Let $f:I^{\circ }\subset
\mathbb{R}
\rightarrow
\mathbb{R}
$ be a differentiable mapping on $I^{\circ }$, $a,b\in I^{\circ }$ with $a<b$%
. If the mapping $\left\vert f^{\prime }\right\vert $ is convex on $[a,b]$,
then we have
\end{theorem}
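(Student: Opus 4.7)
The plan is to reduce the midpoint-versus-average error to a single integral involving $f'$ and then exploit the convexity of $|f'|$ together with elementary integral estimates. The conclusion I expect is the classical Kirmaci midpoint bound
\begin{equation*}
\left| \frac{1}{b-a}\int_a^b f(x)\, dx - f\!\left(\tfrac{a+b}{2}\right)\right| \leq \frac{b-a}{8}\bigl( |f'(a)| + |f'(b)| \bigr),
\end{equation*}
which I would prove in three steps.

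First, I would establish the integration-by-parts identity
\begin{equation*}
\frac{1}{b-a}\int_a^b f(x)\, dx - f\!\left(\tfrac{a+b}{2}\right) = (b-a)\left[ \int_0^{1/2} t\, f'(ta + (1-t)b)\, dt + \int_{1/2}^1 (t-1)\, f'(ta + (1-t)b)\, dt\right].
\end{equation*}
This is obtained by treating the two integrals on the right separately, integrating by parts with $u = t$ on $[0,1/2]$ and $u = t-1$ on $[1/2,1]$, then undoing the linear substitution $x = ta + (1-t)b$ (which maps to $[(a+b)/2,b]$ and $[a,(a+b)/2]$ respectively). The choice $u = t-1$ on the second subinterval is what makes the endpoint contribution at $t=1$ vanish and forces the two $f((a+b)/2)$ boundary terms at $t = 1/2$ to add up with the correct sign.

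Second, I would take absolute values, move them inside via the triangle inequality, and then apply the hypothesis that $|f'|$ is convex on $[a,b]$ to obtain
\begin{equation*}
|f'(ta + (1-t)b)| \leq t\, |f'(a)| + (1-t)\, |f'(b)|.
\end{equation*}
Substituting this affine majorant into both integrals reduces everything to four elementary polynomial integrals, namely $\int_0^{1/2} t^2\, dt$, $\int_0^{1/2} t(1-t)\, dt$, $\int_{1/2}^1 (1-t)t\, dt$ and $\int_{1/2}^1 (1-t)^2\, dt$. These are straightforward to evaluate, and by the symmetry $t \leftrightarrow 1-t$ between the two subintervals they combine to give equal coefficient $\tfrac{1}{8}$ in front of both $|f'(a)|$ and $|f'(b)|$.

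The main obstacle will be the careful bookkeeping in the integration-by-parts step: keeping track of signs, verifying that the boundary contributions at $t=0$ and $t=1$ vanish while those at $t=1/2$ reconstruct the term $f((a+b)/2)$ with the right sign, and then cleanly converting the resulting $t$-integrals of $f$ back into $\int_a^b f(x)\, dx$. Once the identity is in hand the remainder is mechanical. As a sanity check, if $|f'| \leq M$ on $[a,b]$ the bound collapses to $\tfrac{M(b-a)}{4}$, recovering the Lipschitzian estimate (\ref{h.1.1}).
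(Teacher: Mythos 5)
Your proof is correct and follows essentially the same route the paper uses to recover this inequality: your piecewise kernel ($t$ on $[0,\tfrac12]$, $t-1$ on $[\tfrac12,1]$) is exactly the kernel $p(t)$ of Lemma \ref{l.2.1} specialized to $x=\tfrac{a+b}{2}$, and your convexity majorant plus the four elementary integrals reproduce the computation of Theorem \ref{t.2.1} with $m=1$, as recorded in Remark \ref{r.2.2}$(A)$. The coefficients $\tfrac{1}{24}+\tfrac{1}{12}=\tfrac18$ check out, as does the reduction to $\tfrac{M(b-a)}{4}$ when $|f'|\leq M$.
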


\begin{equation}
\left\vert f\left( \frac{a+b}{2}\right) -\frac{1}{b-a}\int_{a}^{b}f(x)dx%
\right\vert \leq \frac{b-a}{8}\left( \left\vert f^{\prime }(a)\right\vert
+\left\vert f^{\prime }(b)\right\vert \right) .  \label{h.1.3}
\end{equation}

In \cite{DG} S.S. Dragomir and G. Toader proved the following
Hermite-Hadamard type inequality for $m-$convex functions.

\begin{theorem}
\label{t.1.4} Let $f:[0,\infty )\rightarrow
\mathbb{R}
$ be an $m-$convex function with $m\in (0,1]$. If $\ 0\leq a<b<\infty $ and $%
f\in L^{1}([a,b])$ then
\end{theorem}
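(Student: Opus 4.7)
The plan is to prove the Dragomir--Toader inequality by writing the integral average as a parameter integral over $[0,1]$, applying the $m$-convexity inequality pointwise, and then integrating. The two ``symmetric'' parameterizations of the segment $[a,b]$ by the $m$-convexity formula $f(tx+m(1-t)y)\le tf(x)+m(1-t)f(y)$ will yield two upper bounds, and the result will be the minimum of these two.

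First, I would perform the affine change of variable $u=ta+(1-t)b$, $t\in[0,1]$, which gives
\begin{equation*}
\frac{1}{b-a}\int_{a}^{b}f(u)\,du=\int_{0}^{1}f\bigl(ta+(1-t)b\bigr)\,dt.
\end{equation*}
The crucial step is to write the argument of $f$ in the form required by $m$-convexity. Taking $x=a$ and $y=b/m$ (which is legitimate since $f$ is defined on $[0,\infty)$ and is $m$-convex there), we get $ta+m(1-t)(b/m)=ta+(1-t)b$, and $m$-convexity yields
\begin{equation*}
f\bigl(ta+(1-t)b\bigr)\le tf(a)+m(1-t)f\!\left(\tfrac{b}{m}\right).
\end{equation*}
Integrating over $t\in[0,1]$ (the integrability of $f$ on $[a,b]$ ensures everything is well-defined, and the right-hand side integrates trivially) produces the first bound
\begin{equation*}
\frac{1}{b-a}\int_{a}^{b}f(u)\,du\le \frac{f(a)+mf(b/m)}{2}.
\end{equation*}

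Next, I would repeat the argument with the alternative parameterization: in the same change of variable, put $x=b$ and $y=a/m$, so that $tb+m(1-t)(a/m)=tb+(1-t)a$. This again sweeps out $[a,b]$ as $t$ ranges over $[0,1]$, and $m$-convexity gives the pointwise estimate $f(tb+(1-t)a)\le tf(b)+m(1-t)f(a/m)$. Integrating yields the companion bound $\frac{1}{b-a}\int_{a}^{b}f\le \frac{f(b)+mf(a/m)}{2}$. Taking the minimum of the two bounds delivers the claimed inequality.

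The only real subtlety is making sure the points $a/m$ and $b/m$ lie in the domain where $f$ is $m$-convex; this is automatic here because the hypothesis places $f$ on all of $[0,\infty)$. Beyond that, the argument is essentially a mechanical transcription of the standard Hermite--Hadamard proof, with the convex combination $tx+(1-t)y$ replaced by the $m$-affine combination $tx+m(1-t)y$, and I do not anticipate any serious obstacle.
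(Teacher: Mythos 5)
The paper does not prove Theorem \ref{t.1.4} at all; it is quoted as a known result from the cited reference of Dragomir and Toader. Your argument is correct and is exactly the standard proof of that result: write the integral mean as $\int_{0}^{1}f\bigl(ta+(1-t)b\bigr)\,dt$, apply the $m$-convexity inequality with the two admissible choices $(x,y)=(a,\tfrac{b}{m})$ and $(x,y)=(b,\tfrac{a}{m})$ (both legitimate since $f$ is $m$-convex on all of $[0,\infty)$), integrate, and take the minimum of the two resulting bounds. No gaps.
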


\begin{equation}
\frac{1}{b-a}\int_{a}^{b}f(x)dx\leq \min \left\{ \frac{f(a)+mf(\frac{b}{m})}{%
2},\frac{f(b)+mf(\frac{a}{m})}{2}\right\} .  \label{h.1.4}
\end{equation}

Some generalizations of this result can be found in \cite{BPR}.

In \cite{BOP} M.K. Bakula, M.E. \"{O}zdemir and J. Pe\v{c}ari\'{c} proved
the following theorems.

\begin{theorem}
\label{t.1.5} Let $I$ be an open real interval such that $[0,\infty )\subset
I$. Let $f:I\rightarrow
\mathbb{R}
$ be a differentiable function on $I$ such that $f^{\prime }\in L([a,b])$,
where $0\leq a<b<\infty $ . If $\left\vert f^{\prime }\right\vert ^{q}$ is $%
m-$convex on $[a,b]$ for some fixed $m\in (0,1]$ and $q\in \lbrack 1,\infty
) $ , then
\end{theorem}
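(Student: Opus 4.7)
The plan is to derive the bound from the standard Montgomery-type identity obtained via integration by parts. First I would write
\begin{equation*}
\frac{f(a)+f(b)}{2}-\frac{1}{b-a}\int_{a}^{b}f(x)\,dx = \frac{b-a}{2}\int_{0}^{1}(1-2t)\,f'\bigl(ta+(1-t)b\bigr)\,dt,
\end{equation*}
(or the analogous identity adapted to whichever functional on the left appears in the statement; if it is the midpoint functional one instead splits the range of $t$ at $1/2$ and uses the two-piece Montgomery kernel). Taking absolute values inside the integral is the first routine step.

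Next, since $q\in[1,\infty)$, I would apply the \emph{power mean} (not H\"{o}lder) inequality. Writing $|1-2t| = |1-2t|^{1-1/q}\,|1-2t|^{1/q}$ and using the fact that $\int_{0}^{1}|1-2t|\,dt = 1/2$ lets me pull out the $q$-th root cleanly, yielding
\begin{equation*}
\int_{0}^{1}|1-2t|\,|f'(ta+(1-t)b)|\,dt \le \Bigl(\tfrac{1}{2}\Bigr)^{1-1/q}\left(\int_{0}^{1}|1-2t|\,|f'(ta+(1-t)b)|^{q}\,dt\right)^{1/q}.
\end{equation*}

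The key $m$-convexity step is then to rewrite the argument of $f'$ as $ta+(1-t)b = ta + m(1-t)(b/m)$, so that Definition~\ref{d.1.1} applied to $|f'|^q\in K_m(b^*)$ gives
\begin{equation*}
|f'(ta+(1-t)b)|^{q}\le t\,|f'(a)|^{q}+m(1-t)\,|f'(b/m)|^{q}.
\end{equation*}
(A symmetric rewriting $ta+(1-t)b = (1-t)b + mt(a/m)$ provides the dual bound that typically appears in the final estimate as a $\min$ or as a second summand.) After substituting this into the previous inequality, the remaining integrals $\int_0^1 |1-2t|\,t\,dt$ and $\int_0^1 |1-2t|\,(1-t)\,dt$ are elementary and each equals $1/4$, which produces the claimed constants.

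The main obstacle, and the only nontrivial choice in the proof, is the bookkeeping in the $m$-convexity step: the inequality of Definition~\ref{d.1.1} is not symmetric in its two arguments, so one must select \emph{which} of $a$, $b$, $a/m$, or $b/m$ plays the role of $x$ versus $y$. This choice is what forces the appearance of $b/m$ (and, in the symmetric version, $a/m$) in the right-hand side and is also why the hypothesis implicitly requires $b/m$ to lie in the domain of $m$-convexity. Once this alignment is fixed, the remaining computations are routine.
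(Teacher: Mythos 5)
The paper does not actually prove Theorem \ref{t.1.5}; it is quoted from \cite{BOP}. Judged on its own terms, your proposal starts from the right identity (the trapezoid-type Montgomery identity), and the power mean step using $\int_{0}^{1}|1-2t|\,dt=\frac{1}{2}$ is fine, but the argument cannot produce the right-hand side that the theorem actually asserts. Your $m$-convexity step is applied \emph{pointwise} to $|f^{\prime}(ta+(1-t)b)|^{q}$, so the only values of $f^{\prime}$ that can ever appear in your final bound are $|f^{\prime}(a)|$, $|f^{\prime}(b)|$, $|f^{\prime}(a/m)|$, $|f^{\prime}(b/m)|$; carrying your computation to the end gives the single term
\begin{equation*}
\frac{b-a}{4}\left(\frac{|f^{\prime}(a)|^{q}+m\,|f^{\prime}(b/m)|^{q}}{2}\right)^{1/q},
\end{equation*}
which is not the claimed bound $\frac{b-a}{4}\bigl(\mu_{1}^{1/q}+\mu_{2}^{1/q}\bigr)$: the quantities $\mu_{1},\mu_{2}$ are minima of two Hadamard-type averages and involve the midpoint values $|f^{\prime}(\frac{a+b}{2})|$ and $|f^{\prime}(\frac{a+b}{2m})|$, which your argument never generates. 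Taking the ``dual'' pointwise bound and forming a $\min$ afterwards does not repair this, since neither branch of either $\mu_{i}$ arises from integrating $t\,|f^{\prime}(a)|^{q}+m(1-t)\,|f^{\prime}(b/m)|^{q}$ (or its dual) against $|1-2t|$ over all of $[0,1]$.

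The missing idea is to split the kernel at $t=\frac{1}{2}$, where $1-2t$ changes sign, bound each piece by an expression of the form $\left(\int |1-2t|\,dt\right)^{1-1/q}\left(\int |f^{\prime}(ta+(1-t)b)|^{q}\,dt\right)^{1/q}$, and then estimate the resulting mean values of $|f^{\prime}|^{q}$ over the half-intervals $[a,\frac{a+b}{2}]$ and $[\frac{a+b}{2},b]$ by the Dragomir--Toader Hermite--Hadamard inequality for $m$-convex functions, i.e.\ inequality (\ref{h.1.4}) of Theorem \ref{t.1.4}. Applying (\ref{h.1.4}) on each half-interval is exactly what manufactures the two minima $\mu_{1},\mu_{2}$ with the midpoint arguments, and it is the same mechanism this paper uses in the proof of its own Theorem \ref{t.2.2}. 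Without invoking (\ref{h.1.4}) on the subintervals, no amount of bookkeeping in the pointwise $m$-convexity step will reach the stated estimate.
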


\begin{equation}
\left\vert \frac{f(a)+f(b)}{2}-\frac{1}{b-a}\int_{a}^{b}f(x)dx\right\vert
\leq \frac{b-a}{4}\left( \mu _{1}^{\frac{1}{q}}+\mu _{2}^{\frac{1}{q}%
}\right) ,  \label{h.1.5}
\end{equation}%
where

\begin{eqnarray*}
\mu _{1} &=&\min \left\{ \frac{\left\vert f^{\prime }(a)\right\vert
^{q}+m\left\vert f^{\prime }(\frac{a+b}{2m})\right\vert ^{q}}{2},\frac{%
\left\vert f^{\prime }(\frac{a+b}{2})\right\vert ^{q}+m\left\vert f^{\prime
}(\frac{a}{m})\right\vert ^{q}}{2}\right\} , \\
\mu _{2} &=&\min \left\{ \frac{\left\vert f^{\prime }(b)\right\vert
^{q}+m\left\vert f^{\prime }(\frac{a+b}{2m})\right\vert ^{q}}{2},\frac{%
\left\vert f^{\prime }(\frac{a+b}{2})\right\vert ^{q}+m\left\vert f^{\prime
}(\frac{b}{m})\right\vert ^{q}}{2}\right\} .
\end{eqnarray*}

\begin{theorem}
\label{t.1.6} Let $I$ be an open real interval such that $[0,\infty )\subset
I$. Let $f:I\rightarrow
\mathbb{R}
$ be a differentiable function on $I$ such that $f^{\prime }\in L([a,b])$,
where $0\leq a<b<\infty $ . If $\left\vert f^{\prime }\right\vert ^{q}$ is $%
m-$convex on $[a,b]$ for some fixed $m\in (0,1]$ and $q\in \lbrack 1,\infty
) $ , then%
\begin{eqnarray}
&&\left\vert f\left( \frac{a+b}{2}\right) -\frac{1}{b-a}\int_{a}^{b}f(x)dx%
\right\vert  \label{h.1.6} \\
&\leq &\frac{b-a}{4}\min \left\{ \left( \frac{\left\vert f^{\prime
}(a)\right\vert ^{q}+m\left\vert f^{\prime }(\frac{b}{m})\right\vert ^{q}}{2}%
\right) ^{\frac{1}{q}},\left( \frac{m\left\vert f^{\prime }(\frac{a}{m}%
)\right\vert ^{q}+\left\vert f^{\prime }(b)\right\vert ^{q}}{2}\right) ^{%
\frac{1}{q}}\right\} .  \notag
\end{eqnarray}
\end{theorem}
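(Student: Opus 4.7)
The plan combines three standard ingredients: a Kirmaci-type integral representation of the midpoint remainder, the power-mean inequality, and two distinct applications of the $m$-convexity of $|f'|^q$.

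First, I would establish the identity
\begin{equation*}
f\!\left(\tfrac{a+b}{2}\right)-\tfrac{1}{b-a}\!\int_a^b\! f(x)\,dx = (b-a)\!\left[\int_{\frac{1}{2}}^{1}(1-t)f'(ta+(1-t)b)\,dt-\int_{0}^{\frac{1}{2}}t\,f'(ta+(1-t)b)\,dt\right],
\end{equation*}
obtained by integration by parts applied to $F(t)=f(ta+(1-t)b)$ on $[0,\tfrac{1}{2}]$ and $[\tfrac{1}{2},1]$ with kernels $t$ and $t-1$; the interior boundary contributions at $t=\tfrac{1}{2}$ combine to give the midpoint value. Taking absolute values bounds the right-hand side by $(b-a)\int_0^1 K(t)\,|f'(ta+(1-t)b)|\,dt$, where $K(t):=\min\{t,1-t\}$. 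A direct computation yields $\int_0^1 K(t)\,dt=\tfrac{1}{4}$ together with the weighted moments $\int_0^1 tK(t)\,dt=\int_0^1(1-t)K(t)\,dt=\tfrac{1}{8}$ (the symmetry $K(t)=K(1-t)$ gives the second equality for free).

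Next I would apply the power-mean (weighted H\"older) inequality, valid for $q\geq 1$, to obtain
\begin{equation*}
\int_0^1 K(t)\,|f'(ta+(1-t)b)|\,dt \leq \left(\tfrac{1}{4}\right)^{1-\frac{1}{q}}\left(\int_0^1 K(t)\,|f'(ta+(1-t)b)|^q\,dt\right)^{\frac{1}{q}}.
\end{equation*}
The decisive step is that $|f'(ta+(1-t)b)|^q$ admits two $m$-convex majorizations, both valid on all of $[0,1]$: writing $ta+(1-t)b=t\!\cdot\!a+m(1-t)\!\cdot\!\tfrac{b}{m}$ yields $|f'(ta+(1-t)b)|^q\leq t|f'(a)|^q+m(1-t)|f'(\tfrac{b}{m})|^q$, whereas writing $ta+(1-t)b=(1-t)\!\cdot\!b+mt\!\cdot\!\tfrac{a}{m}$ yields $|f'(ta+(1-t)b)|^q\leq(1-t)|f'(b)|^q+mt|f'(\tfrac{a}{m})|^q$. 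Substituting either bound and using the two moment identities gives the clean estimates $\int_0^1 K(t)|f'(ta+(1-t)b)|^q\,dt\leq\tfrac{1}{8}\bigl(|f'(a)|^q+m|f'(\tfrac{b}{m})|^q\bigr)$ and, respectively, $\tfrac{1}{8}\bigl(m|f'(\tfrac{a}{m})|^q+|f'(b)|^q\bigr)$.

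Finally, since $\bigl(\tfrac{1}{4}\bigr)^{1-\frac{1}{q}}\bigl(\tfrac{1}{8}\bigr)^{\frac{1}{q}}=\tfrac{1}{4}\cdot 2^{-\frac{1}{q}}$, multiplying through by $(b-a)$ reproduces exactly the two candidates on the right-hand side of (\ref{h.1.6}), and taking the smaller yields the stated minimum. The main conceptual point, more than a technical obstacle, is recognizing that these two particular rewrites of $ta+(1-t)b$ are available on the full unit interval (they work because $\tfrac{b}{m}$ and $\tfrac{a}{m}$ both lie in the domain $I\supset[0,\infty)$ when $m\in(0,1]$); once this is seen, the rest is routine bookkeeping of moments of $K(t)$ together with the power-mean step.
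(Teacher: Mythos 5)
Your argument is correct, and it is worth noting at the outset that the paper itself does not prove Theorem \ref{t.1.6}: the result is quoted from \cite{BOP}, and the only related derivation in the paper is Remark \ref{r.2.1}, which recovers the case $q=1$ as a consequence of Theorem \ref{t.2.1}. Your proof is therefore a genuine reconstruction rather than a paraphrase. Measured against the paper's own machinery, the ingredients largely coincide: your integral identity is exactly Lemma \ref{l.2.1} specialized to $x=\frac{a+b}{2}$, and the two rewrites $ta+(1-t)b=ta+m(1-t)\frac{b}{m}=(1-t)b+mt\cdot\frac{a}{m}$ are the same two $m$-convex majorizations used throughout Section 2. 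The substantive difference is where the power-mean inequality is applied. The paper's Theorem \ref{t.2.3} applies it separately on $\left[0,\frac{b-x}{b-a}\right]$ and its complement, which at $x=\frac{a+b}{2}$ produces the constant $\frac{3^{1-1/q}}{8}$ of (\ref{h.1.2}) (Remark \ref{r.2.5}); you instead apply it once, globally, against the single weight $K(t)=\min\{t,1-t\}$, and the moment identities $\int_0^1K(t)\,dt=\frac{1}{4}$ and $\int_0^1tK(t)\,dt=\int_0^1(1-t)K(t)\,dt=\frac{1}{8}$ then yield $\left(\frac{1}{4}\right)^{1-\frac{1}{q}}\left(\frac{1}{8}\right)^{\frac{1}{q}}=\frac{1}{4}\cdot2^{-\frac{1}{q}}$, which is precisely the constant in (\ref{h.1.6}). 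This global application is the step that actually buys the stronger bound; the piecewise route cannot reach it. The one caveat, which you already flag, is that $m$-convexity of $\left\vert f^{\prime}\right\vert^{q}$ ``on $[a,b]$'' is being invoked with the arguments $\frac{b}{m}$ and $\frac{a}{m}$, so the $m$-convexity inequality must hold with those points in the domain; this is the same implicit convention used in the paper's own proofs (e.g.\ Theorem \ref{t.2.1}), so it is not a gap relative to the paper's standards.
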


The main purpose of this paper is to establish new Ostrowski type
inequalities for functions whose derivatives in absolute value are $m-$%
convex. Using these results we give some applications to special means of
positive real numbers and we obtain some error estimates for the midpoint
formula.

\section{THE RESULTS}

In \cite{AD}, in order to prove some inequalities related to Ostrowski
inequality, M. Alomari and M. Darus used the following lemma with the
constant $(b-a),$ but we changed it with the constant $(a-b)$ to obtain an
equality in Lemma \ref{l.2.1}.

\begin{lemma}
\label{l.2.1} Let $f:I\subset
\mathbb{R}
\rightarrow
\mathbb{R}
$ be a differentiable mapping on $I^{\circ }$ where $a,b\in I$ with $a<b.$
If $f^{\prime }\in L\left( [a,b]\right) $, then the following equality holds:
\end{lemma}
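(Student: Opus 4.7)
The plan is to derive the identity by evaluating each of the two integrals that (by analogy with the Alomari--Darus identity) must appear on the right-hand side, namely $\int_{0}^{1} t\,f'(tx+(1-t)a)\,dt$ and $\int_{0}^{1} t\,f'(tx+(1-t)b)\,dt$, via integration by parts, and then reassembling the pieces to recover $f(x)-\frac{1}{b-a}\int_{a}^{b}f(u)\,du$.

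First I would compute $I_{1}:=\int_{0}^{1} t\,f'(tx+(1-t)a)\,dt$ by choosing $u=t$ and $dv=f'(tx+(1-t)a)\,dt$, so that $v=\frac{1}{x-a}\,f(tx+(1-t)a)$. The boundary term contributes $\frac{f(x)}{x-a}$, while the remaining integral $\int_{0}^{1} f(tx+(1-t)a)\,dt$ becomes, after the linear substitution $u=tx+(1-t)a$, equal to $\frac{1}{x-a}\int_{a}^{x} f(u)\,du$. Therefore $I_{1}=\frac{f(x)}{x-a}-\frac{1}{(x-a)^{2}}\int_{a}^{x} f(u)\,du$, and multiplying by $\frac{(x-a)^{2}}{b-a}$ gives $\frac{1}{b-a}\bigl[(x-a)f(x)-\int_{a}^{x} f(u)\,du\bigr]$.

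Next I would repeat the same integration by parts for $I_{2}:=\int_{0}^{1} t\,f'(tx+(1-t)b)\,dt$, replacing $a$ by $b$ throughout. This gives $I_{2}=\frac{f(x)}{x-b}-\frac{1}{(x-b)^{2}}\int_{b}^{x} f(u)\,du$. The identity $(x-b)^{2}=(b-x)^{2}$ and the author's deliberate use of the factor $(a-b)$ (rather than $(b-a)$) in front of the second term are exactly what is needed to flip the sign so that, after multiplication by $\frac{(b-x)^{2}}{a-b}$, the resulting expression becomes $\frac{1}{b-a}\bigl[(b-x)f(x)-\int_{x}^{b} f(u)\,du\bigr]$.

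Adding the two contributions, the $f(x)$-terms combine as $\frac{(x-a)+(b-x)}{b-a}f(x)=f(x)$, while the two remaining integrals glue to $-\frac{1}{b-a}\int_{a}^{b} f(u)\,du$, producing exactly the desired identity. The only real obstacle is sign bookkeeping: the substitutions introduce factors $(x-a)$ and $(x-b)$ whose signs differ, and the trick of replacing $(b-a)$ by $(a-b)$ in the second coefficient is precisely what ensures both contributions combine additively rather than cancelling; beyond that, the argument is routine integration by parts and linear change of variables.
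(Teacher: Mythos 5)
The paper offers no proof of this lemma at all --- it is quoted from Alomari--Darus with the constant corrected from $(b-a)$ to $(a-b)$ --- so the only question is whether your argument establishes the stated equality. Your integrations by parts and the final bookkeeping are correct: $I_{1}=\frac{f(x)}{x-a}-\frac{1}{(x-a)^{2}}\int_{a}^{x}f(u)\,du$, $I_{2}=\frac{f(x)}{x-b}-\frac{1}{(x-b)^{2}}\int_{b}^{x}f(u)\,du$, and $\frac{(x-a)^{2}}{b-a}I_{1}+\frac{(b-x)^{2}}{a-b}I_{2}$ does sum to $f(x)-\frac{1}{b-a}\int_{a}^{b}f(u)\,du$. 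The gap is that this expression is not the right-hand side of the lemma. The lemma asserts
\begin{equation*}
f(x)-\frac{1}{b-a}\int_{a}^{b}f(u)\,du=(a-b)\left[\int_{0}^{t_{0}}t\,f^{\prime}(ta+(1-t)b)\,dt+\int_{t_{0}}^{1}(t-1)\,f^{\prime}(ta+(1-t)b)\,dt\right],\qquad t_{0}=\tfrac{b-x}{b-a},
\end{equation*}
a single integral in the one parametrization $ta+(1-t)b$ with a kernel that switches at $t_{0}$; your right-hand side uses two different parametrizations, $tx+(1-t)a$ and $tx+(1-t)b$, each over all of $[0,1]$. The two are indeed equal, but only after the linear substitutions $t\mapsto t_{0}s$ on the first piece and $t\mapsto 1-(1-t_{0})s$ on the second, which convert $(a-b)\int_{0}^{t_{0}}t\,f^{\prime}(ta+(1-t)b)\,dt$ into $\frac{(b-x)^{2}}{a-b}\int_{0}^{1}s\,f^{\prime}(sx+(1-s)b)\,ds$ and likewise for the other piece. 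You never perform this reparametrization, and your opening claim that $\int_{0}^{1}t\,f^{\prime}(tx+(1-t)a)\,dt$ and $\int_{0}^{1}t\,f^{\prime}(tx+(1-t)b)\,dt$ are ``the two integrals that must appear on the right-hand side'' misreads the structure of the stated kernel. As written you have proved a correct and equivalent Montgomery-type identity, not the identity of the lemma.

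The repair is routine: either insert the two substitutions above, or --- more directly, and in the form the rest of the paper actually uses --- integrate by parts on each subinterval in the stated parametrization. Since $\frac{d}{dt}f(ta+(1-t)b)=(a-b)f^{\prime}(ta+(1-t)b)$, one gets $\int_{0}^{t_{0}}t\,f^{\prime}(ta+(1-t)b)\,dt=\frac{t_{0}f(x)}{a-b}-\frac{1}{a-b}\int_{0}^{t_{0}}f(ta+(1-t)b)\,dt$ and $\int_{t_{0}}^{1}(t-1)f^{\prime}(ta+(1-t)b)\,dt=\frac{(1-t_{0})f(x)}{a-b}-\frac{1}{a-b}\int_{t_{0}}^{1}f(ta+(1-t)b)\,dt$; adding, multiplying by $(a-b)$, and substituting $u=ta+(1-t)b$ gives the claim in two lines and avoids the division by $x-a$ and $x-b$ in your choice of antiderivative $v$, which degenerates at the endpoints $x=a$ and $x=b$ (those cases would otherwise need separate, if trivial, treatment).
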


\begin{equation}
f(x)-\frac{1}{b-a}\int_{a}^{b}f(u)du=(a-b)\int_{0}^{1}p(t)f^{\prime
}(ta+(1-t)b)dt  \label{m.2.1}
\end{equation}%
for each $t\in \left[ 0,1\right] $ , where

\begin{equation*}
p(t)=\left\{ 
\begin{array}{ccc}
t & , & t\in \left[ 0,\frac{b-x}{b-a}\right] \\ 
&  &  \\ 
t-1 & , & t\in \left( \frac{b-x}{b-a},1\right]%
\end{array}%
\right. ,
\end{equation*}%
for all $x\in \left[ a,b\right] .$

\begin{theorem}
\label{t.2.1} Let $I$ be an open real interval such that $[0,\infty )\subset
I$. Let $f:I\rightarrow
\mathbb{R}
$ be a differentiable function on $I$ such that $f^{\prime }\in L([a,b])$,
where $0\leq a<b<\infty $ . If $\left\vert f^{\prime }\right\vert $ is $m-$%
convex on $[a,b]$ for some fixed $m\in (0,1]$, then the following inequality
holds:
\end{theorem}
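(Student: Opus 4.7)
The plan is to start directly from the identity in Lemma \ref{l.2.1} and proceed in the classical ``take absolute values, apply $m$-convexity, integrate'' pattern. Writing $\lambda := \frac{b-x}{b-a}$, the equality gives
\begin{equation*}
\left\vert f(x)-\frac{1}{b-a}\int_{a}^{b}f(u)du\right\vert
\leq (b-a)\int_{0}^{1}|p(t)|\,|f^{\prime}(ta+(1-t)b)|\,dt.
\end{equation*}
The first key move is to rewrite the argument of $f'$ in a form that exposes the $m$-convexity parameter. Since $ta+(1-t)b = ta + m(1-t)\cdot\tfrac{b}{m}$, the $m$-convexity of $|f'|$ on $[a,b]$ yields the pointwise bound
\begin{equation*}
|f^{\prime}(ta+(1-t)b)|\leq t\,|f^{\prime}(a)| + m(1-t)\,|f^{\prime}(b/m)|,
\end{equation*}
which is exactly the form used in Theorems \ref{t.1.5} and \ref{t.1.6} above.

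Next I would split the integration at $t=\lambda$ according to the definition of $p$, producing four elementary polynomial integrals
\begin{equation*}
\int_{0}^{\lambda}t^{2}\,dt, \qquad \int_{0}^{\lambda}t(1-t)\,dt,\qquad \int_{\lambda}^{1}t(1-t)\,dt,\qquad \int_{\lambda}^{1}(1-t)^{2}\,dt,
\end{equation*}
all of which evaluate in closed form. Grouping by the two coefficients $|f^{\prime}(a)|$ and $m|f^{\prime}(b/m)|$ produces an estimate of the shape
\begin{equation*}
\left\vert f(x)-\frac{1}{b-a}\int_{a}^{b}f(u)du\right\vert
\leq (b-a)\Bigl[A(\lambda)\,|f^{\prime}(a)| + m\,B(\lambda)\,|f^{\prime}(b/m)|\Bigr],
\end{equation*}
where $A(\lambda)=\tfrac{2\lambda^{3}}{3}-\tfrac{\lambda^{2}}{2}+\tfrac{1}{6}$ and $B(\lambda)=-\tfrac{2\lambda^{3}}{3}+\tfrac{3\lambda^{2}}{2}-\lambda+\tfrac{1}{3}$ are the polynomial weights produced by the four integrals. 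The final step is cosmetic: substituting $\lambda=\frac{b-x}{b-a}$ and $1-\lambda=\frac{x-a}{b-a}$ rewrites these polynomial weights in terms of $(x-a)$ and $(b-x)$ divided by appropriate powers of $(b-a)$, which I expect will produce the precise form of the stated inequality.

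The approach is conceptually routine; the only real obstacle is bookkeeping. In particular, one must be careful that the splitting at $\lambda$ uses $|p(t)|=t$ on $[0,\lambda]$ and $|p(t)|=1-t$ on $[\lambda,1]$ (the sign flip in Alomari--Darus's lemma is precisely why the authors changed the constant from $(b-a)$ to $(a-b)$ before Lemma \ref{l.2.1}), and that the $m$-convexity inequality is applied with the correct pairing of $t$ and $1-t$ with $a$ and $b/m$, not $b/m$ and $a$. Once the arithmetic is carried out consistently, the bound follows without any further analytic input, and the case $m=1$ should recover a Kirmaci-type estimate analogous to \eqref{h.1.3}.
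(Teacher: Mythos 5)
Your route is exactly the paper's route: apply Lemma \ref{l.2.1}, use the decomposition $ta+(1-t)b=ta+m(1-t)\tfrac{b}{m}$ together with $m$-convexity to get $\left\vert f^{\prime }(ta+(1-t)b)\right\vert \leq t\left\vert f^{\prime }(a)\right\vert +m(1-t)\left\vert f^{\prime }(\tfrac{b}{m})\right\vert $, split at $\lambda =\tfrac{b-x}{b-a}$, and evaluate the four elementary integrals. Your bookkeeping is correct: $A(\lambda )=\tfrac{2\lambda ^{3}}{3}-\tfrac{\lambda ^{2}}{2}+\tfrac{1}{6}$ is precisely the coefficient of $\left\vert f^{\prime }(a)\right\vert $ in \eqref{m.2.2}, and your $B(\lambda )=-\tfrac{2\lambda ^{3}}{3}+\tfrac{3\lambda ^{2}}{2}-\lambda +\tfrac{1}{3}$ is the expanded form of $\tfrac{\lambda ^{2}}{2}-\tfrac{\lambda ^{3}}{3}+\tfrac{1}{3}(1-\lambda )^{3}$, the coefficient of $m\left\vert f^{\prime }(\tfrac{b}{m})\right\vert $ there.

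The gap is that this only proves the first entry of the minimum in \eqref{m.2.2}: the asserted bound is a $\min $ of two expressions, and you never produce the second one, which involves $\left\vert f^{\prime }(b)\right\vert $ and $\left\vert f^{\prime }(\tfrac{a}{m})\right\vert $. Your closing caveat --- that $m$-convexity must be applied with ``the correct pairing of $t$ and $1-t$ with $a$ and $b/m$, not $b/m$ and $a$'' --- suggests you regard the other pairing as illegitimate, but it is exactly what the second branch requires: writing $ta+(1-t)b=(1-t)b+mt\tfrac{a}{m}$ gives $\left\vert f^{\prime }(ta+(1-t)b)\right\vert \leq (1-t)\left\vert f^{\prime }(b)\right\vert +mt\left\vert f^{\prime }(\tfrac{a}{m})\right\vert $, and integrating against $\left\vert p(t)\right\vert $ as before yields the second competitor in the minimum (the paper dispatches this with the word ``analogously''). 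One warning when you carry this out: with the second decomposition the four elementary integrals pair up the other way, so the direct computation gives $B(\lambda )\left\vert f^{\prime }(b)\right\vert +mA(\lambda )\left\vert f^{\prime }(\tfrac{a}{m})\right\vert $ --- the two polynomial weights swap roles, consistent with the identity $A(1-\lambda )=B(\lambda )$ --- rather than the weights as printed in the second branch of \eqref{m.2.2}; the two versions agree at $x=\tfrac{a+b}{2}$, where $A(\tfrac{1}{2})=B(\tfrac{1}{2})=\tfrac{1}{8}$, so Remark \ref{r.2.1} is unaffected, but for general $x$ you should record the branch your computation actually produces. Without some form of the second bound, the stated inequality with the minimum is not established.
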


\begin{align}
& \left\vert f(x)-\frac{1}{b-a}\int_{a}^{b}f(u)du\right\vert  \label{m.2.2}
\\
\leq (b-a)& \min \left\{ 
\begin{array}{c}
\left[ \frac{1}{6}-\frac{1}{2}\left( \frac{b-x}{b-a}\right) ^{2}+\frac{2}{3}%
\left( \frac{b-x}{b-a}\right) ^{3}\right] \left\vert f^{\prime
}(a)\right\vert \\ 
+m\left[ \frac{1}{2}\left( \frac{b-x}{b-a}\right) ^{2}-\frac{1}{3}\left( 
\frac{b-x}{b-a}\right) ^{3}+\frac{1}{3}\left( \frac{x-a}{b-a}\right) ^{3}%
\right] \left\vert f^{\prime }\left( \frac{b}{m}\right) \right\vert%
\end{array}%
,\right.  \notag \\
& \left. 
\begin{array}{c}
\left[ \frac{1}{6}-\frac{1}{2}\left( \frac{b-x}{b-a}\right) ^{2}+\frac{2}{3}%
\left( \frac{b-x}{b-a}\right) ^{3}\right] \left\vert f^{\prime
}(b)\right\vert \\ 
+m\left[ \frac{1}{2}\left( \frac{b-x}{b-a}\right) ^{2}-\frac{1}{3}\left( 
\frac{b-x}{b-a}\right) ^{3}+\frac{1}{3}\left( \frac{x-a}{b-a}\right) ^{3}%
\right] \left\vert f^{\prime }\left( \frac{a}{m}\right) \right\vert%
\end{array}%
\right\} .  \notag
\end{align}%
for each $x\in \left[ a,b\right] .$

\begin{proof}
By Lemma \ref{l.2.1}, we have%
\begin{eqnarray*}
&&\left\vert f(x)-\frac{1}{b-a}\int_{a}^{b}f(u)du\right\vert \\
&\leq &(b-a)\int_{0}^{\frac{b-x}{b-a}}t\left\vert f^{\prime
}(ta+(1-t)b)\right\vert dt \\
&&+(b-a)\int_{\frac{b-x}{b-a}}^{^{1}}(1-t)\left\vert f^{\prime
}(ta+(1-t)b)\right\vert dt
\end{eqnarray*}%
Since $\left\vert f^{\prime }\right\vert $ is $m-$convex on $\left[ a,b%
\right] $ we know that for any $t\in \left[ 0,1\right] $%
\begin{eqnarray*}
\left\vert f^{\prime }(ta+(1-t)b)\right\vert &=&\left\vert f^{\prime
}(ta+m(1-t)\frac{b}{m})\right\vert \\
&\leq &t\left\vert f^{\prime }(a)\right\vert +m(1-t)\left\vert f^{\prime
}\left( \frac{b}{m}\right) \right\vert ,
\end{eqnarray*}%
Hence%
\begin{eqnarray*}
&&\left\vert f(x)-\frac{1}{b-a}\int_{a}^{b}f(u)du\right\vert \\
&\leq &(b-a)\int_{0}^{^{\frac{b-x}{b-a}}}t\left[ t\left\vert f^{\prime
}(a)\right\vert +m(1-t)\left\vert f^{\prime }\left( \frac{b}{m}\right)
\right\vert \right] dt \\
&&+(b-a)\int_{\frac{b-x}{b-a}}^{^{1}}(1-t)\left[ t\left\vert f^{\prime
}(a)\right\vert +m(1-t)\left\vert f^{\prime }\left( \frac{b}{m}\right)
\right\vert \right] dt \\
&=&(b-a)\left\{ 
\begin{array}{c}
\left[ \frac{1}{6}-\frac{1}{2}\left( \frac{b-x}{b-a}\right) ^{2}+\frac{2}{3}%
\left( \frac{b-x}{b-a}\right) ^{3}\right] \left\vert f^{\prime
}(a)\right\vert \\ 
+m\left[ \frac{1}{2}\left( \frac{b-x}{b-a}\right) ^{2}-\frac{1}{3}\left( 
\frac{b-x}{b-a}\right) ^{3}+\frac{1}{3}\left( \frac{x-a}{b-a}\right) ^{3}%
\right] \left\vert f^{\prime }\left( \frac{b}{m}\right) \right\vert%
\end{array}%
\right\}
\end{eqnarray*}%
where we use the facts that%
\begin{eqnarray*}
&&\int_{0}^{^{\frac{b-x}{b-a}}}t\left[ t\left\vert f^{\prime }(a)\right\vert
+m(1-t)\left\vert f^{\prime }\left( \frac{b}{m}\right) \right\vert \right] dt
\\
&=&\frac{1}{3}\left( \frac{b-x}{b-a}\right) ^{3}\left\vert f^{\prime
}(a)\right\vert +m\left[ \frac{1}{2}\left( \frac{b-x}{b-a}\right) ^{2}-\frac{%
1}{3}\left( \frac{b-x}{b-a}\right) ^{3}\right] \left\vert f^{\prime }\left( 
\frac{b}{m}\right) \right\vert ,
\end{eqnarray*}%
and%
\begin{eqnarray*}
&&\int_{\frac{b-x}{b-a}}^{^{1}}(1-t)\left[ t\left\vert f^{\prime
}(a)\right\vert +m(1-t)\left\vert f^{\prime }\left( \frac{b}{m}\right)
\right\vert \right] dt \\
&=&\left[ \frac{1}{6}-\frac{1}{2}\left( \frac{b-x}{b-a}\right) ^{2}+\frac{1}{%
3}\left( \frac{b-x}{b-a}\right) ^{3}\right] \left\vert f^{\prime
}(a)\right\vert +m\frac{1}{3}\left( \frac{x-a}{b-a}\right) ^{3}\left\vert
f^{\prime }\left( \frac{b}{m}\right) \right\vert ,
\end{eqnarray*}%
and analogously%
\begin{eqnarray*}
&&\left\vert f(x)-\frac{1}{b-a}\int_{a}^{b}f(u)du\right\vert \\
&\leq &(b-a)\left\{ 
\begin{array}{c}
\left[ \frac{1}{6}-\frac{1}{2}\left( \frac{b-x}{b-a}\right) ^{2}+\frac{2}{3}%
\left( \frac{b-x}{b-a}\right) ^{3}\right] \left\vert f^{\prime
}(b)\right\vert \\ 
+m\left[ \frac{1}{2}\left( \frac{b-x}{b-a}\right) ^{2}-\frac{1}{3}\left( 
\frac{b-x}{b-a}\right) ^{3}+\frac{1}{3}\left( \frac{x-a}{b-a}\right) ^{3}%
\right] \left\vert f^{\prime }\left( \frac{a}{m}\right) \right\vert%
\end{array}%
\right\} .
\end{eqnarray*}%
The proof is completed.
\end{proof}

\begin{remark}
\label{r.2.1} Suppose that all the assumptions of Theorem \ref{t.2.1} are
satisfied. If we choose $x=\frac{a+b}{2}$, then we have%
\begin{eqnarray*}
&&\left\vert f\left( \frac{a+b}{2}\right) -\frac{1}{b-a}\int_{a}^{b}f(u)du%
\right\vert \\
&\leq &\frac{b-a}{8}\min \left\{ \left\vert f^{\prime }(a)\right\vert
+m\left\vert f^{\prime }\left( \frac{b}{m}\right) \right\vert ,\left\vert
f^{\prime }(b)\right\vert +m\left\vert f^{\prime }\left( \frac{a}{m}\right)
\right\vert \right\}
\end{eqnarray*}%
which is [(\ref{h.1.6}),q=1].
\end{remark}

\begin{remark}
\label{r.2.2} Suppose that all the assumptions of Theorem \ref{t.2.1} are
satisfied. Then
\end{remark}

$(A)$ If we choose $m=1$ and $x=\frac{a+b}{2}$, we obtain

\begin{equation*}
\left\vert f\left( \frac{a+b}{2}\right) -\frac{1}{b-a}\int_{a}^{b}f(u)du%
\right\vert \leq \frac{b-a}{8}\left( \left\vert f^{\prime }(a)\right\vert
+\left\vert f^{\prime }(b)\right\vert \right) ,
\end{equation*}%
which is (\ref{h.1.3}).

$(B)$ In $(A)$. Additionally, if we choose $\left\vert f^{\prime
}(x)\right\vert \leq M,$ $M>0$

\begin{equation*}
\left\vert f\left( \frac{a+b}{2}\right) -\frac{1}{b-a}\int_{a}^{b}f(u)du%
\right\vert \leq M\frac{(b-a)}{4}
\end{equation*}%
which is (\ref{h.1.1}).

\begin{theorem}
\label{t.2.2} Let $I$ be an open real interval such that $[0,\infty )\subset
I$. Let $f:I\rightarrow
\mathbb{R}
$ be a differentiable function on $I$ such that $f^{\prime }\in L([a,b])$,
where $0\leq a<b<\infty $ . If $\left\vert f^{\prime }\right\vert ^\frac{p}{p-1}$ is $%
m-$convex on $[a,b]$ for some fixed $m\in (0,1]$ and $p>1,$ $\frac{1}{p}+%
\frac{1}{q}=1$ , then the following inequality holds:%
\begin{eqnarray}
&&\left\vert f(x)-\frac{1}{b-a}\int_{a}^{b}f(u)du\right\vert  \label{m.2.3}
\\
\leq \frac{1}{\left( p+1\right) ^{\frac{1}{p}}} &&\left\{
\begin{array}{c}
\frac{(b-x)^{2}}{b-a}\left[ \min \left\{ \frac{\left\vert f^{\prime
}(b)\right\vert ^{q}+m\left\vert f^{\prime }\left( \frac{x}{m}\right)
\right\vert ^{q}}{2},\frac{\left\vert f^{\prime }(x)\right\vert
^{q}+m\left\vert f^{\prime }\left( \frac{b}{m}\right) \right\vert ^{q}}{2}%
\right\} \right] ^{\frac{1}{q}} \\
+\frac{(x-a)^{2}}{b-a}\left[ \min \left\{ \frac{\left\vert f^{\prime
}(a)\right\vert ^{q}+m\left\vert f^{\prime }\left( \frac{x}{m}\right)
\right\vert ^{q}}{2},\frac{\left\vert f^{\prime }(x)\right\vert
^{q}+m\left\vert f^{\prime }\left( \frac{a}{m}\right) \right\vert ^{q}}{2}%
\right\} \right] ^{\frac{1}{q}}%
\end{array}%
\right\}  \notag
\end{eqnarray}%
for each $x\in \left[ a,b\right] .$
\end{theorem}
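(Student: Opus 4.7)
The plan is to mimic the structure of the proof of Theorem \ref{t.2.1} but replace the pointwise use of $m$-convexity with an averaged bound obtained via H\"{o}lder's inequality followed by the Hermite--Hadamard inequality of Theorem \ref{t.1.4}. Concretely, starting from Lemma \ref{l.2.1} and the definition of $p(t)$, I would first write
\begin{equation*}
\left\vert f(x)-\frac{1}{b-a}\int_{a}^{b}f(u)du\right\vert \leq (b-a)\int_{0}^{\frac{b-x}{b-a}}t\left\vert f^{\prime }(ta+(1-t)b)\right\vert dt+(b-a)\int_{\frac{b-x}{b-a}}^{1}(1-t)\left\vert f^{\prime }(ta+(1-t)b)\right\vert dt.
\end{equation*}

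Next, I would apply H\"{o}lder's inequality to each of the two integrals with exponents $p$ and $q=p/(p-1)$. For the first, $\left(\int_{0}^{(b-x)/(b-a)} t^{p}dt\right)^{1/p}$ evaluates to $\frac{1}{(p+1)^{1/p}}\left(\frac{b-x}{b-a}\right)^{(p+1)/p}$, and analogously the power integral on $[(b-x)/(b-a),1]$ of $(1-t)^{p}$ yields $\frac{1}{(p+1)^{1/p}}\left(\frac{x-a}{b-a}\right)^{(p+1)/p}$. For the remaining $L^{q}$ factors, the substitution $u=ta+(1-t)b$ converts
\begin{equation*}
\int_{0}^{\frac{b-x}{b-a}}\left\vert f^{\prime }(ta+(1-t)b)\right\vert ^{q}dt=\frac{1}{b-a}\int_{x}^{b}\left\vert f^{\prime }(u)\right\vert ^{q}du,\qquad \int_{\frac{b-x}{b-a}}^{1}\left\vert f^{\prime }(ta+(1-t)b)\right\vert ^{q}dt=\frac{1}{b-a}\int_{a}^{x}\left\vert f^{\prime }(u)\right\vert ^{q}du.
\end{equation*}

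At this point the crucial observation is that $|f^{\prime}|^{q}$ is $m$-convex on $[a,b]$ (hence on each subinterval), so Theorem \ref{t.1.4} applied on $[x,b]$ and on $[a,x]$ gives exactly the two minima appearing in \eqref{m.2.3}: on $[x,b]$ one obtains the bound $(b-x)\min\{(|f^{\prime}(x)|^{q}+m|f^{\prime}(b/m)|^{q})/2,(|f^{\prime}(b)|^{q}+m|f^{\prime}(x/m)|^{q})/2\}$, and on $[a,x]$ the analogous bound with endpoints $a,x$. Inserting these bounds, using $(p+1)/p+1/q=2$ to combine the exponents $\left(\frac{b-x}{b-a}\right)^{(p+1)/p}\left(\frac{b-x}{b-a}\right)^{1/q}=\left(\frac{b-x}{b-a}\right)^{2}$, and symmetrically for the $(x-a)$ piece, and finally multiplying through by $(b-a)$, collapses everything into the asserted right-hand side.

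I do not expect a genuine obstacle here: the argument is a standard H\"{o}lder+Hermite--Hadamard bootstrap on top of Lemma \ref{l.2.1}. The only place requiring care is the arithmetic of exponents $\tfrac{p+1}{p}+\tfrac{1}{q}=2$ that produces the $(b-x)^{2}$ and $(x-a)^{2}$ factors, and the bookkeeping needed to identify the two candidates in each $\min$ with the correct application of Theorem \ref{t.1.4} (the two choices come from the two sides of that inequality, so no extra work is needed to justify the minimum).
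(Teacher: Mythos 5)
Your proposal is correct and follows essentially the same route as the paper: Lemma \ref{l.2.1}, H\"{o}lder's inequality with the power integrals $\left(\tfrac{b-x}{b-a}\right)^{p+1}\tfrac{1}{p+1}$ and $\left(\tfrac{x-a}{b-a}\right)^{p+1}\tfrac{1}{p+1}$, and then Theorem \ref{t.1.4} applied to $\left\vert f^{\prime }\right\vert ^{q}$ on $[x,b]$ and $[a,x]$ to produce the two minima. The only cosmetic difference is that you make the substitution $u=ta+(1-t)b$ explicit, whereas the paper states the resulting averaged bound directly in the $t$-variable.
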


\begin{proof}
From Lemma \ref{l.2.1} and using the H\"{o}lder inequality, we have%
\begin{eqnarray*}
&&\left\vert f(x)-\frac{1}{b-a}\int_{a}^{b}f(u)du\right\vert \\
&\leq &(b-a)\left( \int_{0}^{\frac{b-x}{b-a}}t^{p}dt\right) ^{\frac{1}{p}%
}\left( \int_{0}^{\frac{b-x}{b-a}}\left\vert f^{\prime }\left(
ta+(1-t)b\right) \right\vert ^{q}dt\right) ^{\frac{1}{q}} \\
&&+(b-a)\left( \int_{\frac{b-x}{b-a}}^{1}(1-t)^{p}dt\right) ^{\frac{1}{p}%
}\left( \int_{\frac{b-x}{b-a}}^{1}\left\vert f^{\prime }\left(
ta+(1-t)b\right) \right\vert ^{q}dt\right) ^{\frac{1}{q}} \\
&\leq &(b-a)\left( \frac{b-x}{b-a}\right) ^{\frac{p+1}{p}}\left( \frac{1}{p+1%
}\right) ^{\frac{1}{p}}\left( \frac{b-x}{b-a}\right) ^{\frac{1}{q}} \\
&&\times \left( \min \left\{ \frac{\left\vert f^{\prime }(b)\right\vert
^{q}+m\left\vert f^{\prime }\left( \frac{x}{m}\right) \right\vert ^{q}}{2},%
\frac{\left\vert f^{\prime }(x)\right\vert ^{q}+m\left\vert f^{\prime
}\left( \frac{b}{m}\right) \right\vert ^{q}}{2}\right\} \right) ^{\frac{1}{q}%
} \\
&&+(b-a)\left( \frac{x-a}{b-a}\right) ^{\frac{p+1}{p}}\left( \frac{1}{p+1}%
\right) ^{\frac{1}{p}}\left( \frac{x-a}{b-a}\right) ^{\frac{1}{q}} \\
&&\times \left( \min \left\{ \frac{\left\vert f^{\prime }(a)\right\vert
^{q}+m\left\vert f^{\prime }\left( \frac{x}{m}\right) \right\vert ^{q}}{2},%
\frac{\left\vert f^{\prime }(x)\right\vert ^{q}+m\left\vert f^{\prime
}\left( \frac{a}{m}\right) \right\vert ^{q}}{2}\right\} \right) ^{\frac{1}{q}%
} \\
&=&\frac{1}{(p+1)^{\frac{1}{p}}}\frac{1}{b-a}\left\{ 
\begin{array}{c}
(b-x)^{2}\left[ \min \left\{ \frac{\left\vert f^{\prime }(b)\right\vert
^{q}+m\left\vert f^{\prime }\left( \frac{x}{m}\right) \right\vert ^{q}}{2},%
\frac{\left\vert f^{\prime }(x)\right\vert ^{q}+m\left\vert f^{\prime
}\left( \frac{b}{m}\right) \right\vert ^{q}}{2}\right\} \right] ^{\frac{1}{q}%
} \\ 
+(x-a)^{2}\left[ \min \left\{ \frac{\left\vert f^{\prime }(a)\right\vert
^{q}+m\left\vert f^{\prime }\left( \frac{x}{m}\right) \right\vert ^{q}}{2},%
\frac{\left\vert f^{\prime }(x)\right\vert ^{q}+m\left\vert f^{\prime
}\left( \frac{a}{m}\right) \right\vert ^{q}}{2}\right\} \right] ^{\frac{1}{q}%
}%
\end{array}%
\right\}
\end{eqnarray*}%
where we use the facts that%
\begin{eqnarray*}
\int_{0}^{\frac{b-x}{b-a}}t^{p}dt &=&\left( \frac{b-x}{b-a}\right) ^{p+1}%
\frac{1}{p+1}, \\
\int_{\frac{b-x}{b-a}}^{1}(1-t)^{p}dt &=&\left( \frac{x-a}{b-a}\right) ^{p+1}%
\frac{1}{p+1},
\end{eqnarray*}%
and by Theorem \ref{t.1.4} we get 
\begin{eqnarray*}
&&\frac{b-a}{b-x}\int_{0}^{\frac{b-x}{b-a}}\left\vert f^{\prime
}(ta+(1-t)b)\right\vert ^{q}dt \\
&\leq &\min \left\{ \frac{\left\vert f^{\prime }(b)\right\vert
^{q}+m\left\vert f^{\prime }\left( \frac{x}{m}\right) \right\vert ^{q}}{2},%
\frac{\left\vert f^{\prime }(x)\right\vert ^{q}+m\left\vert f^{\prime
}\left( \frac{b}{m}\right) \right\vert ^{q}}{2}\right\} , \\
&&\frac{b-a}{x-a}\int_{\frac{b-x}{b-a}}^{1}\left\vert f^{\prime
}(ta+(1-t)b)\right\vert ^{q}dt \\
&\leq &\min \left\{ \frac{\left\vert f^{\prime }(a)\right\vert
^{q}+m\left\vert f^{\prime }\left( \frac{x}{m}\right) \right\vert ^{q}}{2},%
\frac{\left\vert f^{\prime }(x)\right\vert ^{q}+m\left\vert f^{\prime
}\left( \frac{a}{m}\right) \right\vert ^{q}}{2}\right\} .
\end{eqnarray*}%
The proof is completed.
\end{proof}

\begin{corollary}
\label{c.2.1} Suppose that all the assumptions of Theorem \ref{t.2.2} are
satisfied, if we choose $\left\vert f^{\prime }(x)\right\vert \leq M$, $M>0$%
, then we have%
\begin{eqnarray*}
&&\left\vert f(x)-\frac{1}{b-a}\int_{a}^{b}f(u)du\right\vert \\
&\leq &\left( \frac{1}{\left( p+1\right) ^{\frac{1}{p}}}\right) \left( \frac{%
1+m}{2}\right) ^{\frac{1}{q}}M\left[ \frac{(b-x)^{2}+(x-a)^{2}}{b-a}\right] .
\end{eqnarray*}
\end{corollary}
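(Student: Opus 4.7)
The plan is to apply the inequality (\ref{m.2.3}) of Theorem \ref{t.2.2} and then replace every value of $|f'|$ appearing in it by the uniform bound $M$. This reduces the proof to a straightforward arithmetic simplification.

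First I would observe that under the hypothesis $|f'(x)|\leq M$, each of the six derivative values $|f'(a)|$, $|f'(b)|$, $|f'(x)|$, $|f'(a/m)|$, $|f'(b/m)|$, $|f'(x/m)|$ is bounded by $M$ (note that $a/m$, $b/m$, $x/m$ lie in $[0,\infty)\subset I$ since $m\in(0,1]$, so the bound is legitimate at these points). Consequently every expression of the shape
\begin{equation*}
\frac{|f'(u)|^q + m|f'(v)|^q}{2}
\end{equation*}
occurring inside the two $\min$'s in (\ref{m.2.3}) is bounded above by $\frac{(1+m)M^q}{2}$, irrespective of which branch of the $\min$ is attained. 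Raising to the $1/q$-th power, each of the two bracketed $1/q$-factors in (\ref{m.2.3}) is therefore dominated by $\left(\frac{1+m}{2}\right)^{1/q}M$.

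Finally I would substitute these uniform estimates into (\ref{m.2.3}) and factor the common constant $\left(\frac{1+m}{2}\right)^{1/q}M$ out of both summands; what remains inside the braces is $(b-x)^2 + (x-a)^2$, which combines with the $\frac{1}{b-a}$ factor and the prefactor $\frac{1}{(p+1)^{1/p}}$ to give precisely the asserted bound. There is no real obstacle here: the estimate is a mechanical specialization of Theorem \ref{t.2.2}, and the only point worth noting is that the $\min$'s can be discarded safely because the bound $\frac{(1+m)M^q}{2}$ applies uniformly to every argument of every $\min$.
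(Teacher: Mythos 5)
Your argument is correct and is exactly the intended (and only natural) derivation: the paper states Corollary \ref{c.2.1} without proof as a mechanical specialization of Theorem \ref{t.2.2}, and your substitution of $M$ for every derivative value, giving the uniform bound $\bigl(\tfrac{1+m}{2}\bigr)^{1/q}M$ for each bracketed factor, reproduces the asserted inequality. Your remark that the points $a/m$, $b/m$, $x/m$ remain in the domain where the bound applies is a worthwhile detail the paper leaves implicit.
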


\begin{corollary}
\label{c.2.2} Suppose that all the assumptions of Theorem \ref{t.2.2} are
satisfied, if we choose $x=\frac{a+b}{2}$ and $\frac{1}{2}<\left(
\frac{1}{p+1}\right) ^{\frac{1}{p}}<1$, then we have
\end{corollary}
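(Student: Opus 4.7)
The plan is straightforward: specialize Theorem \ref{t.2.2} to $x=\frac{a+b}{2}$ and simplify using the symmetry of the midpoint. No new ideas are needed beyond careful bookkeeping, since all of the heavy lifting has already been done in Theorem \ref{t.2.2}.

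First, substitute $x=\frac{a+b}{2}$ into inequality (\ref{m.2.3}). Because $b-x=x-a=\frac{b-a}{2}$, the two weights $\frac{(b-x)^{2}}{b-a}$ and $\frac{(x-a)^{2}}{b-a}$ both collapse to the common value $\frac{b-a}{4}$, which can be factored out and produces the overall prefactor $\frac{b-a}{4(p+1)^{1/p}}$ in front of the bracketed expression. Next, replace $\frac{x}{m}$ by $\frac{a+b}{2m}$ and $f'(x)$ by $f'\!\left(\frac{a+b}{2}\right)$ in the two inner minima. The two resulting quantities match the $\mu_{2}$ and $\mu_{1}$ of Theorem \ref{t.1.5} verbatim, so the corollary should read
\begin{equation*}
\left|f\!\left(\frac{a+b}{2}\right)-\frac{1}{b-a}\int_{a}^{b}f(u)\,du\right|\leq \frac{b-a}{4(p+1)^{1/p}}\left(\mu_{1}^{1/q}+\mu_{2}^{1/q}\right).
\end{equation*}

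The role of the technical hypothesis $\frac{1}{2}<\left(\frac{1}{p+1}\right)^{1/p}<1$ is comparative rather than essential for deriving the bound: it is used to contrast the new constant $\frac{1}{4(p+1)^{1/p}}$ with the constant $\frac{1}{4}$ appearing in (\ref{h.1.5}). Since $(p+1)^{1/p}>1$ whenever $p>1$, the new bound is strictly sharper than (\ref{h.1.5}), while the lower bound $\tfrac{1}{2}$ delimits the range of $p$ over which this improvement is meaningful. I expect no real conceptual obstacle; the only care required is clerical, namely lining up the four arguments inside the minima correctly with the $\mu_{1}$ and $\mu_{2}$ of Theorem \ref{t.1.5} and keeping the sum structure inherited from (\ref{m.2.3}) intact.
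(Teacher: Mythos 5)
Your derivation is correct and is exactly the paper's (implicit) argument: setting $x=\frac{a+b}{2}$ in (\ref{m.2.3}) makes both weights $\frac{(b-x)^{2}}{b-a}$ and $\frac{(x-a)^{2}}{b-a}$ equal to $\frac{b-a}{4}$, and the two minima become precisely the $\mu_{1},\mu_{2}$ of Corollary \ref{c.2.2}. The only divergence is the final constant: the paper invokes the hypothesis $\left(\frac{1}{p+1}\right)^{\frac{1}{p}}<1$ to discard your sharper factor $\frac{1}{(p+1)^{1/p}}$ and state the bound as $\frac{b-a}{4}\left(\mu_{1}^{1/q}+\mu_{2}^{1/q}\right)$ so that it mirrors the form of (\ref{h.1.5}), so your inequality is stronger and immediately implies the printed one (though, as Remark \ref{r.2.3} indicates, (\ref{h.1.5}) bounds the trapezoidal deviation, so the corollary is an analogue of it rather than a sharpening).
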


\begin{equation*}
\left\vert f\left( \frac{a+b}{2}\right) -\frac{1}{b-a}\int_{a}^{b}f(u)du%
\right\vert \leq \frac{b-a}{4}\left( \mu _{1}^{\frac{1}{q}}+\mu _{2}^{\frac{1%
}{q}}\right) ,
\end{equation*}%
\textit{where}

\begin{eqnarray*}
\mu _{1} &=&\min \left\{ \frac{\left\vert f^{\prime }(b)\right\vert
^{q}+m\left\vert f^{\prime }(\frac{a+b}{2m})\right\vert ^{q}}{2},\frac{%
\left\vert f^{\prime }(\frac{a+b}{2})\right\vert ^{q}+m\left\vert f^{\prime
}\left( \frac{b}{m}\right) \right\vert ^{q}}{2}\right\} , \\
\mu _{2} &=&\min \left\{ \frac{\left\vert f^{\prime }(a)\right\vert
^{q}+m\left\vert f^{\prime }(\frac{a+b}{2m})\right\vert ^{q}}{2},\frac{%
\left\vert f^{\prime }(\frac{a+b}{2})\right\vert ^{q}+m\left\vert f^{\prime
}(\frac{a}{m})\right\vert ^{q}}{2}\right\} .
\end{eqnarray*}

\begin{remark}
\label{r.2.3} Corollary \ref{c.2.2} is similar to (\ref{h.1.5}) inequality,
but for the left-hand side of Hermite-Hadamard inequality.
\end{remark}

\begin{remark}
\label{r.2.4} Suppose that all the assumptions of Theorem \ref{t.2.2} are
satisfied. Then in Corollary \ref{c.2.2}
\end{remark}

$\left( D\right) $ $\left\vert f^{\prime }\right\vert $ is increasing and $%
m=1$ then we have

\begin{equation*}
\left\vert f\left( \frac{a+b}{2}\right) -\frac{1}{b-a}\int_{a}^{b}f(u)du%
\right\vert \leq \frac{b-a}{2}\left\vert f^{\prime }(b)\right\vert ,
\end{equation*}

$(E)$ $\left\vert f^{\prime }\right\vert $ is decreasing and $m=1$ then we
have

\begin{equation*}
\left\vert f\left( \frac{a+b}{2}\right) -\frac{1}{b-a}\int_{a}^{b}f(u)du%
\right\vert \leq \frac{b-a}{2}\left\vert f^{\prime }(a)\right\vert ,
\end{equation*}

$\left( F\right) $ $\left\vert f^{\prime }(b)\right\vert =\left\vert
f^{\prime }(a)\right\vert =\left\vert f^{\prime }\left( \frac{a+b}{2}\right)
\right\vert $ and $m=1$ then we have

\begin{equation*}
\left\vert f\left( \frac{a+b}{2}\right) -\frac{1}{b-a}\int_{a}^{b}f(u)du%
\right\vert \leq \frac{b-a}{2}\left\vert f^{\prime }\left( \frac{a+b}{2}%
\right) \right\vert .
\end{equation*}

\begin{theorem}
\label{t.2.3} Let $I$ be an open real interval such that $[0,\infty )\subset
I$. Let $f:I\rightarrow
\mathbb{R}
$ be a differentiable function on $I$ such that $f^{\prime }\in L([a,b])$,
where $0\leq a<b<\infty $. If $\left\vert f^{\prime }\right\vert ^{q}$ is $%
m- $convex on $[a,b]$ for some fixed $m\in (0,1]$ and $q\in \lbrack 1,\infty
)$, $x\in \left[ a,b\right] $, then the following inequality holds:
\end{theorem}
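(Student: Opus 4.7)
The plan is to mirror the strategy of Theorem \ref{t.2.2} but replace H\"older's inequality with the Power Mean inequality, which is the natural tool for the exponent range $q \in [1,\infty)$. Starting from Lemma \ref{l.2.1}, I split the integral at $t = (b-x)/(b-a)$, obtaining
\begin{equation*}
\left| f(x) - \frac{1}{b-a}\int_a^b f(u)\,du \right| \leq (b-a)\int_0^{\frac{b-x}{b-a}} t\,|f'(ta+(1-t)b)|\,dt + (b-a)\int_{\frac{b-x}{b-a}}^{1} (1-t)\,|f'(ta+(1-t)b)|\,dt.
\end{equation*}
To each term I apply the Power Mean inequality, using $t$ (resp.\ $1-t$) as the weight and writing $|f'| = |f'|^{1-1/q}\cdot|f'|^{1/q}$; this extracts the factors $\left(\int t\,dt\right)^{1-1/q}$ and $\left(\int (1-t)\,dt\right)^{1-1/q}$, which on the respective sub-intervals equal $\tfrac{1}{2}\left(\tfrac{b-x}{b-a}\right)^2$ and $\tfrac{1}{2}\left(\tfrac{x-a}{b-a}\right)^2$ raised to the power $1-1/q$.

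Next, I would exploit the $m$-convexity of $|f'|^q$ in two equivalent ways. Writing the argument as $ta + m(1-t)(b/m)$ yields $|f'(ta+(1-t)b)|^q \leq t|f'(a)|^q + m(1-t)|f'(b/m)|^q$; alternatively, writing it as $(1-t)b + m\cdot t\cdot (a/m)$ yields $(1-t)|f'(b)|^q + m\,t\,|f'(a/m)|^q$. Substituting either bound into the weighted inner integrals reduces the task to elementary polynomial moments of the forms $\int t^2\,dt$, $\int t(1-t)\,dt$, $\int (1-t)^2\,dt$, $\int t(1-t)^2\,dt$ on the two sub-intervals, producing explicit rational functions of $(b-x)/(b-a)$ and $(x-a)/(b-a)$ in the style already encountered in the proof of Theorem \ref{t.2.1}.

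Finally, I would take the minimum of the two bounds arising from the two $m$-convex decompositions, producing the expected "min" structure inside each $1/q$-th power and matching the pattern of Theorems \ref{t.2.1} and \ref{t.2.2}. The main obstacle is purely organizational: tracking the polynomial moments on the two sub-intervals and packaging them into a clean closed form. As a consistency check, at $q=1$ the Power Mean factor collapses to unity and the estimate should reduce to the bound of Theorem \ref{t.2.1}; at $x=(a+b)/2$ and $m=1$ one should recover the known Hermite--Hadamard-type estimate \eqref{h.1.3}.
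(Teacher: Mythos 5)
Your proposal follows essentially the same route as the paper: Lemma \ref{l.2.1}, splitting at $t=(b-x)/(b-a)$, the power mean inequality with weights $t$ and $1-t$, the $m$-convexity bound $|f'(ta+(1-t)b)|^{q}\leq t|f'(a)|^{q}+m(1-t)|f'(b/m)|^{q}$, and explicit polynomial moments; your consistency checks ($q=1$ recovering Theorem \ref{t.2.1}, and $x=\tfrac{a+b}{2}$ recovering the known midpoint bound) are also correct. The only difference is that you additionally take the minimum over the two $m$-convex decompositions, which yields a slightly sharper bound than the paper's stated inequality \eqref{m.2.4}, where only the $\bigl(a,\tfrac{b}{m}\bigr)$ decomposition is used.
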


\begin{eqnarray}
&&\left\vert f(x)-\frac{1}{b-a}\int_{a}^{b}f(u)du\right\vert  \label{m.2.4}
\\
&\leq &(b-a)\left( \frac{1}{2}\right) ^{1-\frac{1}{q}}  \notag \\
&&\left\{ 
\begin{array}{c}
\left( \frac{b-x}{b-a}\right) ^{2\left( 1-\frac{1}{q}\right) }\left[ \frac{1%
}{3}\left( \frac{b-x}{b-a}\right) ^{3}\left\vert f^{\prime }(a)\right\vert
^{q}+m\frac{(b-x)^{2}(b-3a+2x)}{6(b-a)^{3}}\left\vert f^{\prime }\left( 
\frac{b}{m}\right) \right\vert ^{q}\right] ^{\frac{1}{q}} \\ 
+\left( \frac{x-a}{b-a}\right) ^{2\left( 1-\frac{1}{q}\right) }\left[ \left( 
\frac{1}{6}+\frac{(b-x)^{2}(3a-b-2x)}{6(b-a)^{3}}\right) \left\vert
f^{\prime }(a)\right\vert ^{q}+m\frac{1}{3}\left( \frac{x-a}{b-a}\right)
^{3}\left\vert f^{\prime }\left( \frac{b}{m}\right) \right\vert ^{q}\right]
^{\frac{1}{q}}%
\end{array}%
\right\}  \notag
\end{eqnarray}%
\textit{for each }$x\in \left[ a,b\right] .$

\begin{proof}
By Lemma \ref{l.2.1} and using the well known power mean inequality we have%
\begin{eqnarray*}
&&\left\vert f(x)-\frac{1}{b-a}\int_{a}^{b}f(u)du\right\vert \\
&\leq &(b-a)\int_{0}^{^{\frac{b-x}{b-a}}}t\left\vert f^{\prime
}(ta+(1-t)b)\right\vert dt \\
&&+(b-a)\int_{\frac{b-x}{b-a}}^{^{1}}(1-t)\left\vert f^{\prime
}(ta+(1-t)b)\right\vert dt \\
&\leq &(b-a)\left( \int_{0}^{^{\frac{b-x}{b-a}}}tdt\right) ^{1-\frac{1}{q}%
}\left( \int_{0}^{^{\frac{b-x}{b-a}}}t\left\vert f^{\prime
}(ta+(1-t)b)\right\vert ^{q}dt\right) ^{\frac{1}{q}} \\
&&+(b-a)\left( \int_{\frac{b-x}{b-a}}^{^{1}}(1-t)dt\right) ^{1-\frac{1}{q}%
}\left( \int_{\frac{b-x}{b-a}}^{^{1}}(1-t)\left\vert f^{\prime
}(ta+(1-t)b)\right\vert ^{q}dt\right) ^{\frac{1}{q}} \\
&\leq &(b-a)\left( \frac{1}{2}\right) ^{1-\frac{1}{q}} \\
&&\left\{ 
\begin{array}{c}
\left( \frac{b-x}{b-a}\right) ^{2\left( 1-\frac{1}{q}\right) }\left[ \frac{1%
}{3}\left( \frac{b-x}{b-a}\right) ^{3}\left\vert f^{\prime }(a)\right\vert
^{q}+m\frac{(b-x)^{2}(b-3a+2x)}{6(b-a)^{3}}\left\vert f^{\prime }\left( 
\frac{b}{m}\right) \right\vert ^{q}\right] ^{\frac{1}{q}} \\ 
+\left( \frac{x-a}{b-a}\right) ^{2\left( 1-\frac{1}{q}\right) }\left[ \left( 
\frac{1}{6}+\frac{(b-x)^{2}(3a-b-2x)}{6(b-a)^{3}}\right) \left\vert
f^{\prime }(a)\right\vert ^{q}+m\frac{1}{3}\left( \frac{x-a}{b-a}\right)
^{3}\left\vert f^{\prime }\left( \frac{b}{m}\right) \right\vert ^{q}\right]
^{\frac{1}{q}}%
\end{array}%
\right\}
\end{eqnarray*}%
where we use the facts that%
\begin{equation*}
\int_{0}^{\frac{b-x}{b-a}}tdt=\frac{1}{2}\left( \frac{b-x}{b-a}\right) ^{2},
\end{equation*}

\begin{eqnarray*}
&&\int_{0}^{\frac{b-x}{b-a}}t\left\vert f^{\prime }(ta+(1-t)b)\right\vert
^{q}dt \\
&\leq &\frac{1}{3}\left( \frac{b-x}{b-a}\right) ^{3}\left\vert f^{\prime
}(a)\right\vert ^{q}+m\frac{(b-x)^{2}(b-3a+2x)}{6(b-a)^{3}}\left\vert
f^{\prime }\left( \frac{b}{m}\right) \right\vert ^{q},
\end{eqnarray*}

\begin{equation*}
\int_{\frac{b-x}{b-a}}^{1}(1-t)dt=\frac{1}{2}\left( \frac{x-a}{b-a}\right)
^{2},
\end{equation*}

\begin{eqnarray*}
&&\int_{\frac{b-x}{b-a}}^{1}(1-t)\left\vert f^{\prime
}(ta+(1-t)b)\right\vert ^{q}dt \\
&\leq &\left[ \frac{1}{6}+\frac{(b-x)^{2}(3a-2x-b)}{6(b-a)^{3}}\right]
\left\vert f^{\prime }(a)\right\vert ^{q}+m\frac{1}{3}\left( \frac{x-a}{b-a}%
\right) ^{3}\left\vert f^{\prime }\left( \frac{b}{m}\right) \right\vert ^{q}.
\end{eqnarray*}%
The proof is completed.
\end{proof}

\begin{remark}
\label{r.2.5} Suppose that all the assumptions of Theorem \ref{t.2.3} are
satisfied. If we choose $x=\frac{a+b}{2},$ we obtain
\end{remark}

\begin{equation*}
\left\vert f\left( \frac{a+b}{2}\right) -\frac{1}{b-a}\int_{a}^{b}f(u)du%
\right\vert \leq (b-a)\left( \frac{3^{1-\frac{1}{q}}}{8}\right) \left(
\left\vert f^{\prime }(a)\right\vert +m^{\frac{1}{q}}\left\vert f^{\prime
}\left( \frac{b}{m}\right) \right\vert \right)
\end{equation*}%
which is (\ref{h.1.2}).

\section{APPLICATIONS TO SPECIAL MEANS}

Let us recall the following means for two positive numbers.

$\left( AM\right) $ \textit{The Arithmetic mean}

\begin{equation*}
A=A(a,b)=\frac{a+b}{2};\text{ }a,b>0,
\end{equation*}

$\left( p-LM\right) $ \textit{The p-Logarithmic mean}%
\begin{equation*}
L_{p}=L_{p}(a,b)=\left\{ 
\begin{array}{cc}
a & if\text{ }a=b \\ 
\left[ \frac{b^{p+1}-a^{p+1}}{(p+1)(b-a)}\right] ^{\frac{1}{p}} & if\text{ }%
a\neq b%
\end{array}%
\right. ;\text{ }a,b>0,
\end{equation*}

$\left( IM\right) $ \textit{The Identric mean }%
\begin{equation*}
I=I(a,b)=\left\{ 
\begin{array}{cc}
a & if\text{ }a=b \\ 
\frac{1}{e}\left( \frac{b^{b}}{^{a^{a}}}\right) ^{\frac{1}{b-a}} & if\text{ }%
a\neq b%
\end{array}%
\right. ;\text{ }a,b>0.
\end{equation*}%
The following propositions hold:

\begin{proposition}
\label{p.3.1} Let $a,b\in \left[ 0,\infty \right) $, and $a<b$, $n\geq 2$
with $m\in \left( 0,1\right] $. Then we have
\end{proposition}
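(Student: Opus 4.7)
The plan is to derive the claimed inequality as an application of the midpoint specialization of Theorem \ref{t.2.1} (recorded in Remark \ref{r.2.1}) to the power function $f(u)=u^{n}$ on $[0,\infty)$, and to read off the right-hand side in terms of the means $A$, $L_{n}$.

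First I would verify the required hypothesis, namely that $|f'|=n\,u^{n-1}$ is $m$-convex on $[a,b]$. Since $n\geq 2$, the map $g(u)=u^{n-1}$ is convex and nonnegative on $[0,\infty)$ with $g(0)=0$. Setting $\alpha=t$ and $\beta=m(1-t)$ for $t\in[0,1]$, one has $\alpha,\beta\geq 0$ and $\alpha+\beta=t+m(1-t)\leq 1$, so the star-shapedness $g(\lambda z)\leq \lambda g(z)$ for $\lambda\in[0,1]$ (which follows from $g(0)=0$ and convexity) combines with the usual convex-combination inequality on the rescaled argument to give $(tx+m(1-t)y)^{n-1}\leq tx^{n-1}+m(1-t)y^{n-1}$. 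This is exactly the $m$-convexity of $g$, and hence of $|f'|$, so Remark \ref{r.2.1} applies.

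Next I would identify both sides of the target inequality with the stated means. Evaluating at the midpoint yields $f\bigl(\tfrac{a+b}{2}\bigr)=A(a,b)^{n}$, while direct integration gives $\tfrac{1}{b-a}\int_{a}^{b}u^{n}\,du=L_{n}^{n}(a,b)$, so the left-hand side of Remark \ref{r.2.1} becomes $|A(a,b)^{n}-L_{n}^{n}(a,b)|$. On the right-hand side I would substitute the four derivative values $|f'(a)|=na^{n-1}$, $|f'(b)|=nb^{n-1}$, $|f'(a/m)|=n(a/m)^{n-1}$, and $|f'(b/m)|=n(b/m)^{n-1}$, then pull out the common factor $n$ to match the advertised form.

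I expect the only non-routine step to be the $m$-convexity verification for $u^{n-1}$; once that is in place, the inequality is obtained by mechanical substitution into an already-established Ostrowski estimate. One minor point worth flagging is the boundary case $a=0$: there $(a/m)^{n-1}$ vanishes, and the minimum in the bound collapses to the expression involving $b^{n-1}$ and $(b/m)^{n-1}$, so the estimate remains meaningful and sharp at that endpoint.
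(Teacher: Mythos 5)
Your proposal is correct and follows essentially the same route as the paper, which likewise obtains the result by applying Remark \ref{r.2.1} to $f(x)=x^{n}$ and identifying the midpoint value and the integral mean with $A^{n}$ and $L_{n}^{n}$. Your explicit verification that $|f'(u)|=nu^{n-1}$ (rather than $f$ itself) is the function whose $m$-convexity is needed, via convexity plus $g(0)=0$, is a welcome detail that the paper's one-line proof glosses over.
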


\begin{equation*}
\left\vert A^{n}(a,b)-L_{n}^{n}(a,b)\right\vert \leq n\frac{b-a}{8}\min
\left\{ 2A\left( a^{n-1},m\left( \frac{b}{m}\right) ^{^{n-1}}\right)
,2A\left( \left( b\right) ^{n-1},m\left( \frac{a}{m}\right) ^{^{n-1}}\right)
\right\} .
\end{equation*}

\begin{proof}
The proof follows by Remark \ref{r.2.1} on choosing $f:\left[ 0,\infty
\right) \rightarrow \left[ 0,\infty \right) $, $f(x)=x^{n},$ $n\in 
\mathbb{Z}
,n\geq 2$ which is $m-$convex on $\left[ 0,\infty \right) .$
\end{proof}

\begin{proposition}
\label{p.3.2} Let $a,b\in \left[ 0,\infty \right) $, and $a<b$, with $m\in
\left( 0,1\right] $. Then we have
\end{proposition}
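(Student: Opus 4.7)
\smallskip
\noindent\textbf{Proof proposal.}
Following the template of Proposition~\ref{p.3.1}, my plan is to apply Remark~\ref{r.2.1} (the midpoint specialization of Theorem~\ref{t.2.1}) to a differentiable function $f$ whose derivative in absolute value is $m$-convex, chosen so that $f\!\left(\tfrac{a+b}{2}\right)$ and $\tfrac{1}{b-a}\int_a^b f(u)\,du$ both translate into the special means of $a$ and $b$ appearing on the left-hand side of the desired inequality, after which the right-hand side of Remark~\ref{r.2.1} collapses to the claimed bound.

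Since $f(x)=x^{n}$ has already been exploited in Proposition~\ref{p.3.1}, the natural next candidate (among the three means recorded above) is $f(x)=\ln x$, which captures the Identric mean. Indeed,
\begin{equation*}
f\!\left(\frac{a+b}{2}\right)=\ln A(a,b),\qquad \frac{1}{b-a}\int_{a}^{b}\ln u\,du=\frac{b\ln b-a\ln a}{b-a}-1=\ln I(a,b),
\end{equation*}
while $|f'(x)|=1/x$ supplies $|f'(a)|=1/a$, $|f'(b)|=1/b$, $|f'(b/m)|=m/b$, and $|f'(a/m)|=m/a$. Substituting into Remark~\ref{r.2.1} would then produce
\begin{equation*}
\bigl|\ln A(a,b)-\ln I(a,b)\bigr|\le\frac{b-a}{8}\,\min\!\left\{\frac{1}{a}+\frac{m^{2}}{b},\ \frac{1}{b}+\frac{m^{2}}{a}\right\},
\end{equation*}
which is the Identric-mean analogue of Proposition~\ref{p.3.1}.

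The main technical hurdle is to verify that $|f'|(x)=1/x$ qualifies as $m$-convex on $[a,b]$ throughout $m\in(0,1]$. The case $m=1$ is immediate from the classical convexity of $1/x$ on $(0,\infty)$, but for $m<1$ the implicit requirement $f(mx)\le mf(x)$ is not honored globally by $1/x$; one therefore has to adopt the interval-restricted reading of $m$-convexity tacitly used in the Section~2 theorems (with $0<a$) and check the defining inequality $\tfrac{1}{tx+m(1-t)y}\le\tfrac{t}{x}+\tfrac{m(1-t)}{y}$ for $x,y\in[a,b]$ directly by a weighted AM--HM argument, exploiting $t+m(1-t)\le 1$. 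Should this verification prove awkward, a standard fallback is to replace $\ln x$ by a closely related function (e.g.\ $x\ln x$) whose absolute derivative satisfies $m$-convexity cleanly while still producing the Identric-mean identity, after which the final step is a single substitution into Remark~\ref{r.2.1} exactly parallel to the proof of Proposition~\ref{p.3.1}.
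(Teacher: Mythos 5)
Your proposal does not prove the proposition the paper actually states, and the obstacle you yourself flag is fatal even to the version you do prove. The displayed inequality attached to Proposition~\ref{p.3.2} bounds $\left\vert \ln\frac{I(a+1,b+1)}{A(a,b)+1}\right\vert$ by $\frac{b-a}{4}\left(\eta_{1}^{\frac{1}{q}}+\eta_{2}^{\frac{1}{q}}\right)$, where the $\eta_{i}$ are built from $q$-th powers of $\frac{1}{a+1}$, $\frac{1}{b+1}$, $\frac{2}{a+b+2}$, $\frac{m}{a+m}$, $\frac{m}{b+m}$, $\frac{2m}{a+b+2m}$. The paper obtains this in one line by applying Corollary~\ref{c.2.2} --- the H\"{o}lder-based midpoint estimate, which is where the exponent $\frac{1}{q}$ and the constant $\frac{b-a}{4}$ come from --- to the function $f(x)=-\ln(x+1)$. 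The shift by $1$ is not cosmetic: the hypotheses permit $a=0$, where your choice $f(x)=\ln x$ and $\vert f^{\prime}(x)\vert=1/x$ are undefined, and the shift is exactly what makes $\vert f^{\prime}(x)\vert=\frac{1}{x+1}$ (and its $q$-th power) well defined and usable on all of $[0,\infty)$. Note also that Corollary~\ref{c.2.2} requires $m$-convexity of $\vert f^{\prime}\vert^{q}$, not of $\vert f^{\prime}\vert$, so Remark~\ref{r.2.1} is the wrong tool for reproducing the stated right-hand side in any case.

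Concretely, your candidate conclusion $\vert\ln A-\ln I\vert\le\frac{b-a}{8}\min\left\{\frac{1}{a}+\frac{m^{2}}{b},\ \frac{1}{b}+\frac{m^{2}}{a}\right\}$ is a different statement with a different constant, no dependence on $q$, and a right-hand side that is infinite when $a=0$; and the $m$-convexity of $1/x$ on $[0,b]$ for $m<1$, which Remark~\ref{r.2.1} would require, is precisely the point you could not settle. Your fallback remark (``replace $\ln x$ by a closely related function'') is the right instinct: the correct replacement is $-\ln(x+1)$, which after the same integral computation you carried out yields $I(a+1,b+1)$ in place of $I(a,b)$ and $A(a,b)+1=A(a+1,b+1)$ in place of $A(a,b)$; but the estimate must then be run through Corollary~\ref{c.2.2} rather than Remark~\ref{r.2.1} to recover the bound the proposition asserts.
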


\begin{equation*}
\left\vert \ln \frac{I(a+1,b+1)}{A(a,b)+1}\right\vert \leq \frac{b-a}{4}%
\left( \eta _{1}^{\frac{1}{q}}+\eta _{2}^{\frac{1}{q}}\right) ,
\end{equation*}%
where

\begin{eqnarray*}
\eta _{1}^{\frac{1}{q}} &=&\min \left\{ \frac{\left( \frac{1}{b+1}\right)
^{q}+m\left( \frac{2m}{a+b+2m}\right) ^{q}}{2},\frac{\left( \frac{2}{a+b+2}%
\right) ^{q}+m\left( \frac{m}{b+m}\right) ^{q}}{2}\right\} , \\
\eta _{2}^{\frac{1}{q}} &=&\min \left\{ \frac{\left( \frac{1}{a+1}\right)
^{q}+m\left( \frac{2m}{a+b+2m}\right) ^{q}}{2},\frac{\left( \frac{2}{a+b+2}%
\right) ^{q}+m\left( \frac{m}{a+m}\right) ^{q}}{2}\right\} .
\end{eqnarray*}

\begin{proof}
The proof follows by Corollary \ref{c.2.2} on choosing $f:\left[ 0,\infty
\right) \rightarrow (-\infty ,0]$, $f(x)=-\ln (x+1)$ which is $m-$convex on $%
[0,\infty )$, $p>1.$
\end{proof}

\section{APPLICATIONS TO THE MIDPOINT FORMULA FOR $1-CONVEX$ FUNCTIONS}

Let $d$ be a division $a=x_{0}<x_{1}<...<x_{n-1}<x_{n}=b$ of the interval $%
\left[ a,b\right] $ and consider the quadrature formula

\begin{equation}
\int_{a}^{b}f(x)dx=M(f,d)+E(f,d),  \label{h.4.1}
\end{equation}%
where

\begin{equation*}
M(f,d)=\sum_{i=1}^{n-1}\left( x_{i+1}-x_{i}\right) f\left( \frac{%
x_{i+1}+x_{i}}{2}\right)
\end{equation*}%
is the midpoint formula and $E(f,d)$ denotes the associated approximation
error (see \cite{CPJP}).

Here, we obtain some error estimates for the midpoint formula.

\begin{proposition}
\label{p.4.1} Let $I$ be an open real interval such that $[0,\infty )\subset
I$. Let $f:I\rightarrow
\mathbb{R}
$ be a differentiable function on $I$ such that $f^{\prime }\in L([a,b])$, where $0\leq a<b<\infty $ . If $\left\vert f^{\prime }\right\vert ^{q}$ is $%
1-$convex on $[a,b]$ for some fixed $m\in (0,1]$ and $p>1,$ $\frac{1}{p}+%
\frac{1}{q}=1$, then in (\ref{h.4.1}), for every division $d$ of $\left[ a,b%
\right] $, the midpoint error satisfies
\end{proposition}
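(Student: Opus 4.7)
The plan is to apply Corollary~\ref{c.2.2} (the case $m=1$ being precisely $1$-convexity) on each subinterval $[x_i,x_{i+1}]$ of the division $d$, scale by the subinterval length, and then combine the resulting pointwise estimates by the triangle inequality.

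First, since $\left\vert f^{\prime }\right\vert ^{q}$ is $1$-convex (that is, convex) on $[a,b]$, its restriction to any $[x_i,x_{i+1}]$ is convex as well, so the hypotheses of Theorem~\ref{t.2.2} are satisfied on every piece with $m=1$. Taking the test point to be the midpoint $(x_i+x_{i+1})/2$, Corollary~\ref{c.2.2} yields
\begin{equation*}
\left\vert f\!\left(\frac{x_i+x_{i+1}}{2}\right)-\frac{1}{x_{i+1}-x_i}\int_{x_i}^{x_{i+1}}f(u)\,du\right\vert \leq \frac{x_{i+1}-x_i}{4}\bigl(\mu_{1,i}^{1/q}+\mu_{2,i}^{1/q}\bigr),
\end{equation*}
where $\mu_{1,i}$ and $\mu_{2,i}$ are the natural analogues of $\mu_{1}$ and $\mu_{2}$ from Corollary~\ref{c.2.2} with $a,b$ replaced by $x_i,x_{i+1}$ and with $m=1$.

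Second, I would multiply both sides by $(x_{i+1}-x_i)$ to obtain the per-subinterval midpoint error
\begin{equation*}
\left\vert (x_{i+1}-x_i)f\!\left(\frac{x_i+x_{i+1}}{2}\right)-\int_{x_i}^{x_{i+1}}f(u)\,du\right\vert \leq \frac{(x_{i+1}-x_i)^2}{4}\bigl(\mu_{1,i}^{1/q}+\mu_{2,i}^{1/q}\bigr),
\end{equation*}
sum the resulting inequalities over all subintervals of $d$, and apply the triangle inequality on the left. From the identity $E(f,d)=\int_a^b f(x)\,dx-M(f,d)$ in \eqref{h.4.1}, the left-hand side reduces to $\vert E(f,d)\vert$, while the right-hand side is the claimed bound, expressed as a weighted sum of local contributions.

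The argument poses no genuine obstacle; it is the standard aggregation of a pointwise inequality over the pieces of a partition. The only care required is bookkeeping: writing $\mu_{1,i}$ and $\mu_{2,i}$ with the correctly shifted endpoints on each subinterval, and observing that the side condition $\tfrac{1}{2}<(1/(p+1))^{1/p}<1$ appearing in Corollary~\ref{c.2.2} is automatic for every $p>1$ and therefore imposes no further restriction on either the division $d$ or the exponent $p$.
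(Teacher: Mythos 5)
Your proposal is correct and follows essentially the same route as the paper: apply Corollary \ref{c.2.2} with $m=1$ on each subinterval $[x_i,x_{i+1}]$, rescale by the subinterval length, and sum via the triangle inequality using \eqref{h.4.1}. Your added observation that the side condition $\tfrac{1}{2}<(1/(p+1))^{1/p}<1$ holds automatically for all $p>1$ is accurate and harmless.
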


\begin{equation*}
\left\vert E(f,d)\right\vert \leq \frac{1}{4}\sum_{i=0}^{n-1}\left(
x_{i+1}-x_{i}\right) ^{2}\left( \mu _{1}^{\frac{1}{q}}+\mu _{2}^{\frac{1}{q}%
}\right) ,
\end{equation*}%
where

\begin{eqnarray*}
\mu _{1} &=&\min \left\{ \frac{\left\vert f^{\prime }(x_{i})\right\vert
^{q}+\left\vert f^{\prime }(\frac{x_{i}+x_{i+1}}{2})\right\vert ^{q}}{2},%
\frac{\left\vert f^{\prime }(\frac{x_{i}+x_{i+1}}{2})\right\vert
^{q}+\left\vert f^{\prime }(x_{i})\right\vert ^{q}}{2}\right\} =\frac{%
\left\vert f^{\prime }(\frac{x_{i}+x_{i+1}}{2})\right\vert ^{q}+\left\vert
f^{\prime }(x_{i})\right\vert ^{q}}{2}, \\
\mu _{2} &=&\min \left\{ \frac{\left\vert f^{\prime }(x_{i+1})\right\vert
^{q}+\left\vert f^{\prime }(\frac{x_{i}+x_{i+1}}{2})\right\vert ^{q}}{2},%
\frac{\left\vert f^{\prime }(\frac{x_{i}+x_{i+1}}{2})\right\vert
^{q}+\left\vert f^{\prime }(x_{i+1})\right\vert ^{q}}{2}\right\} =\frac{%
\left\vert f^{\prime }(\frac{x_{i}+x_{i+1}}{2})\right\vert ^{q}+\left\vert
f^{\prime }(x_{i+1})\right\vert ^{q}}{2}.
\end{eqnarray*}

\begin{proof}
On applying Corollary \ref{c.2.2} with $m=1$  on the subinterval $\left[
x_{i},x_{i+1}\right] $ $(i=0,1,2,...,n-1)$ of the division, we have

\begin{equation*}
\left\vert f\left( \frac{x_{i+1}+x_{i}}{2}\right) -\frac{1}{x_{i+1}-x_{i}}%
\int_{x_{i}}^{x_{i+1}}f(x)dx\right\vert \leq \frac{x_{i+1}-x_{i}}{4}\left(
\mu _{1}^{\frac{1}{q}}+\mu _{2}^{\frac{1}{q}}\right) ,
\end{equation*}%
where%
\begin{eqnarray*}
\mu _{1} &=&\frac{\left\vert f^{\prime }(\frac{x_{i}+x_{i+1}}{2})\right\vert
^{q}+\left\vert f^{\prime }(x_{i})\right\vert ^{q}}{2} \\
\mu _{2} &=&\frac{\left\vert f^{\prime }(\frac{x_{i}+x_{i+1}}{2})\right\vert
^{q}+\left\vert f^{\prime }(x_{i+1})\right\vert ^{q}}{2}.
\end{eqnarray*}%
Hence, in (\ref{h.4.1}) we have

\begin{eqnarray*}
&&\left\vert \int_{a}^{b}f(x)dx-M(f,d)\right\vert \\
&=&\left\vert \sum_{i=0}^{n-1}\left[ \int_{x_{i}}^{x_{i+1}}f(x)dx-\left(
x_{i+1}-x_{i}\right) f\left( \frac{x_{i+1}-x_{i}}{2}\right) \right]
\right\vert \\
&\leq &\sum_{i=0}^{n-1}\left\vert \int_{x_{i}}^{x_{i+1}}f(x)dx-\left(
x_{i+1}-x_{i}\right) f\left( \frac{x_{i+1}-x_{i}}{2}\right) \right\vert \\
&\leq &\frac{1}{4}\sum_{i=0}^{n-1}\left( x_{i+1}-x_{i}\right) ^{2}\left( \mu
_{1}^{\frac{1}{q}}+\mu _{2}^{\frac{1}{q}}\right) ,
\end{eqnarray*}%
which completes the proof.
\end{proof}

\begin{proposition}
\label{p.4.2} Let $I$ be an open real interval such that $[0,\infty )\subset
I$. Let $f:I\rightarrow
\mathbb{R}
$ be a differentiable function on $I$ such that $f^{\prime }\in L([a,b])$,
where $0\leq a<b<\infty $. If $\left\vert f^{\prime }\right\vert ^{q}$ is $%
1- $convex on $[a,b]$ for some fixed $m\in (0,1]$ and $q\in \lbrack 1,\infty
)$, $x\in \left[ a,b\right] $, then in (\ref{h.4.1}), for every division $d$
of $\left[ a,b\right] $, the midpoint error satisfies
\end{proposition}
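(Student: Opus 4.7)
The plan is to mirror the proof of Proposition \ref{p.4.1}, but to replace the H\"{o}lder-based single-interval bound coming from Corollary \ref{c.2.2} by the power-mean-based midpoint bound of Remark \ref{r.2.5}. Since $\left\vert f^{\prime }\right\vert ^{q}$ is $1$-convex (that is, ordinary convex) on $[a,b]$, its restriction to any subinterval $[x_{i},x_{i+1}]$ of the division $d$ is again convex, so the hypotheses of Theorem \ref{t.2.3} hold on each such subinterval with $m=1$.

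First, I would specialize Remark \ref{r.2.5} with $m=1$ on the subinterval $[x_{i},x_{i+1}]$, obtaining
\begin{equation*}
\left\vert f\!\left( \frac{x_{i}+x_{i+1}}{2}\right) -\frac{1}{x_{i+1}-x_{i}}\int_{x_{i}}^{x_{i+1}}f(t)\,dt\right\vert \leq (x_{i+1}-x_{i})\,\frac{3^{1-1/q}}{8}\bigl(\left\vert f^{\prime }(x_{i})\right\vert +\left\vert f^{\prime }(x_{i+1})\right\vert \bigr).
\end{equation*}
Next, multiply both sides by $(x_{i+1}-x_{i})$ and sum from $i=0$ to $n-1$. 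Using the identity $\int_{a}^{b}f=\sum_{i=0}^{n-1}\int_{x_{i}}^{x_{i+1}}f$ together with the triangle inequality, this yields
\begin{equation*}
\left\vert E(f,d)\right\vert \leq \frac{3^{1-1/q}}{8}\sum_{i=0}^{n-1}(x_{i+1}-x_{i})^{2}\bigl(\left\vert f^{\prime }(x_{i})\right\vert +\left\vert f^{\prime }(x_{i+1})\right\vert \bigr),
\end{equation*}
which is the anticipated midpoint error estimate.

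There is essentially no substantive obstacle here: the argument is entirely parallel to that of Proposition \ref{p.4.1}, and the only extra ingredient beyond Remark \ref{r.2.5} is the elementary observation that $1$-convexity of $\left\vert f^{\prime }\right\vert ^{q}$ descends to each subinterval of $d$. The only bookkeeping point worth flagging is that the constant $1/4$ of Proposition \ref{p.4.1} is replaced here by $3^{1-1/q}/8$, and that the two minima appearing in the H\"{o}lder-based bound collapse to the simpler sum $\left\vert f^{\prime }(x_{i})\right\vert +\left\vert f^{\prime }(x_{i+1})\right\vert$ once $m=1$ is substituted into the midpoint-version estimate of Theorem \ref{t.2.3}.
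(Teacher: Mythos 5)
Your proposal is correct and follows exactly the route the paper intends: apply Remark \ref{r.2.5} with $m=1$ on each subinterval $[x_i,x_{i+1}]$, multiply by the subinterval length, and sum using the triangle inequality, precisely mirroring the structure of Proposition \ref{p.4.1}. The paper's own proof is a one-line reference to this same argument, so there is nothing further to add.
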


\begin{equation*}
\left\vert E(f,d)\right\vert \leq \left( \frac{3^{1-\frac{1}{q}}}{8}\right)
\sum_{i=0}^{n-1}\left( x_{i+1}-x_{i}\right) ^{2}\left( \left\vert f^{\prime
}(x_{i})\right\vert +\left\vert f^{\prime }\left( x_{i+1}\right) \right\vert
\right) .
\end{equation*}

\begin{proof}
The proof is similar to that of Proposition \ref{p.4.1} and using Remark \ref%
{r.2.5} with $m=1$.
\end{proof}

\bigskip

\end{document}